\setlist[enumerate,1]{label=\textup{(\alph*)},
ref={\alph*}, align=left, labelsep=0.5ex, leftmargin=*}
\setlist[enumerate,2]{label=\textup{({\roman*})},
ref={\roman*}, align=right, labelsep*=1ex, widest={(ii)},  
leftmargin=5.4ex}
\DeclareSymbolFont{bbold}{U}{bbold}{m}{n}
\DeclareMathSymbol{\bbone}{\mathord}{bbold}{"31}
\theoremstyle{plain}
\newtheorem{theorem}{Theorem}[section]
\newtheorem{corollary}{Corollary}[section]
\newtheorem{lemma}{Lemma}[section]
\newtheorem{proposition}{Proposition}[section]
\newtheoremstyle{boldremex}
    {\dimexpr\topsep/2\relax} 
    {\dimexpr\topsep/2\relax} 
    {}          
    {}          
    {\bfseries} 
    {.}         
    {.5em}      
    {}          
\theoremstyle{boldremex}
\newtheorem{remark}{Remark}[section]
\newtheorem{example}{Example}[section]
\renewenvironment{proof}[1][\proofname]{%
   \par\pushQED{\qed}\normalfont%
   \topsep6\p@\@plus6\p@\relax
   \trivlist\item[\hskip\labelsep\bfseries#1\@addpunct{.}]%
   \ignorespaces
}{%
   \popQED\endtrivlist\@endpefalse
}
\newcommand{\CC}{\mathbb{C}}
\newcommand{\RR}{\mathbb{R}}
\newcommand{\NN}{\mathbb{N}}
\newcommand{\ZZ}{\mathbb{Z}}
\newcommand{\Zpl}{\ZZ_+}
\newcommand{\calf}{\mathcal{F}}
\newcommand{\calm}{\mathcal{M}}
\newcommand{\calfw}{\calf_{\mathrm{W}}}
\newcommand{\set}[1]{\underline{#1}}
\newcommand{\dd}{{\mathrm{d}}}
\newcommand{\ee}{{\mathrm{e}}}
\newcommand{\pii}{{\pi}}
\newcommand{\abs}[1]{\lvert#1\rvert}              
\newcommand{\ABS}[1]{\Bigl\lvert#1\Bigr\rvert}    
\newcommand{\Be}{\mathrm{Be}}
\newcommand{\bfcdot}{{\boldsymbol{\cdot}}}
\newcommand{\card}[1]{|#1|}                 
\newcommand{\Cov}{\mathrm{Cov}}
\newcommand{\dirac}{\delta}                 
\newcommand{\dtv}[2]{d_{\mathrm{TV}}(#1,#2)}
\newcommand{\EE}{\mathrm{E}} 
\newcommand{\floor}[1]{{\lfloor #1 \rfloor}}
\newcommand{\normone}[1]{\|#1\|_1}
\newcommand{\norminfty}[1]{\|#1\|_\infty}
\newcommand{\normtv}[1]{\|#1\|_{\mathrm{TV}}}  
\newcommand{\Normtv}[1]{\Bigl\|#1\Bigr\|_{\mathrm{TV}}}  
\newcommand{\normloc}[1]{\|#1\|_{\mathrm{loc}}}
\newcommand{\normw}[1]{\|#1\|_{\mathrm{W}}}
\newcommand{\po}{\mathrm{po}}
\newcommand{\Po}{\mathrm{Po}}
\newcommand{\Var}{\mathop{{\mathrm{Var}}}}
\newcommand{\newatop}[2]{\genfrac{}{}{0pt}{}{\scriptstyle #1}{\scriptstyle #2}}
\begin{document}
\title{On the Poisson approximation of random diagonal sums of Bernoulli 
matrices}
\author{
Bero Roos\footnote{
Postal address:
FB IV -- Mathematics, 
Trier University, 
54286 Trier, Germany. 
E-mail: \texttt{bero.roos@uni-trier.de}}\medskip\\
Trier University 
\date{
}
}
\maketitle
\begin{abstract}
We use the Stein-Chen method to prove new explicit 
inequalities for the total 
variation, Wasserstein and local distances between the distribution of 
a random diagonal sum of a Bernoulli matrix and a Poisson distribution. 
Approximation results using a finite signed measure of higher order 
are given as well. Some of our bounds improve on those in Theorem 4.A of
A.D. Barbour, L. Holst and S. Janson (Poisson approximation. Clarendon 
Press, Oxford, 1992). \medskip\\
\textbf{Keywords:} 
approximation error, Hoeffding statistic, Poisson approximation 
\medskip \\
\textbf{2020 Mathematics Subject Classification:}  
60F05;  
62E17.  
 
\end{abstract}
\section{Introduction and review of some known results} 
\label{s86256}
Let $n\in\NN=\{1,2,3,\dots\}$ be a natural number 
with $n\neq1$ and
$X_{j,r}$ for $j,r\in\set{n}:=\{1,\dots,n\}$ be independent random 
variables with Bernoulli distributions $P^{X_{j,r}}=\Be(p_{j,r})$ with 
success probabilities $p_{j,r}\in[0,1]$. 
For $k,\ell\in\NN$ with $k\leq \ell$, we let
$\set{\ell}_{\neq}^k=\{(j_1,\dots,j_k)\,\vert\,j_1,\dots,j_k
\in\set{\ell}\text{ pairwise distinct}\}$, where we also write 
$(j(1),\dots,j(k))$ for $(j_1,\dots,j_k)$. 
In particular, $\set{n}_{\neq}^n$ is the set of all 
permutations on $\set{n}$.
Let $\pi=(\pi(1),\dots,\pi(n))$ be a random permutation uniformly 
distributed on $\set{n}_{\neq}^n$. 
We assume that all the $X_{j,r}$'s and $\pi$ are independent. 
Let us call $(X_{1,\pi(1)},X_{2,\pi(2)},\dots,X_{n,\pi(n)})$ 
a random (generalized) diagonal of the Bernoulli matrix $X=(X_{j,r})$
and 
\begin{align*}
S_n=\sum_{j=1}^nX_{j,\pi(j)}
\end{align*}
the corresponding random diagonal sum. 
If the $X_{j,r}$'s are constants, $S_n$ is sometimes also called 
Hoeffding permutation statistic (see \citet{MR3920366}) or 
Hoeffding statistic (see \citet{MR4372142}).  
Further let 
\begin{gather*}
\overline{p}_{j,\bfcdot}=\frac{1}{n}\sum_{r=1}^np_{j,r}
\quad\text{for } j\in\set{n}, \quad 
\overline{p}_{\bfcdot,r}=\frac{1}{n}\sum_{j=1}^np_{j,r}
\quad\text{for } r\in\set{n},\\
\lambda=\EE S_n
=\frac{1}{n}\sum_{j=1}^n\sum_{r=1}^np_{j,r}
=\sum_{j=1}^n\overline{p}_{j,\bfcdot}
=\sum_{r=1}^n\overline{p}_{\bfcdot,r}>0,\quad 
\overline{p}=\frac{\lambda}{n}.
\end{gather*}

In this paper, we consider the approximation of the 
distribution $P^{S_n}$ of $S_n$ by a Poisson distribution
and also by a signed measure of higher order. 
To  measure the accuracy, we use the total variation, 
Wasserstein and local norms, the definitions of which require some 
notation. 
Let $\Zpl=\NN\cup\{0\}$, $\RR$ and $\CC$ 
be the sets of nonnegative integers, real and complex numbers, 
respectively. For two sets $A$ and $B$, let
$B^A$ be the set of functions from $A$ to $B$. 
For $f\in \RR^{\Zpl}$, 
let $\Delta f\in\RR^{\Zpl}$ be defined by 
$\Delta f(m)=f(m+1)-f(m)$ for $m\in\Zpl$ 
and set $\norminfty{f}=\sup_{m\in\Zpl}\abs{f(m)}$ and
$\normone{f}=\sum_{m\in\Zpl}\abs{f(m)}$.
Let $\calfw=\{f\in \RR^{\Zpl}\,\vert\,\norminfty{\Delta f}\leq 1\}$. 
Let $\calm$ be the vector space of all finite signed measures 
on the power set of $\Zpl$ and 
set 
\begin{align*}
\calm'=\Bigl\{Q\in \calm\,\Big\vert\,Q(\Zpl)=0, 
\sum_{m=0}^\infty m\abs{f_Q(m)}<\infty\Bigr\},
\end{align*}
where $f_{Q}\in \RR^{\Zpl}$ with $f_{Q}(m)=Q(\{m\})$ for $m\in\Zpl$ 
is the counting density of $Q$.
Let 
\begin{gather*}
\normtv{Q}
=\normone{f_Q},  \quad
\normloc{Q}
= \norminfty{f_Q},  \\
\dtv{Q_1}{Q_2}=\sup_{A\subseteq \Zpl}\abs{Q_1(A)-Q_2(A)}
\end{gather*}
be the total variation norm and the local norm   
of $Q\in\calm$, and the total variation distance between 
$Q_1,Q_2\in\calm$. 
The Wasserstein norm $\normw{Q}$ 
(sometimes also called Fortet-Mourier or Kantorovich norm) 
of $Q\in\calm'$ can be defined by 
\begin{align*}
\normw{Q}=\sum_{m=0}^\infty\abs{Q(\set{m}\cup\{0\})}.
\end{align*}
It is well-known that, for $Q_1,Q_2\in\calm$ and $Q\in\calm'$,  
\begin{gather}
\dtv{Q_1}{Q_2}
=\frac{1}{2}\normtv{Q_1-Q_2}
\quad\text{ if }Q_1(\Zpl)=Q_2(\Zpl), \nonumber\\
\normw{Q}
=\sup_{f\in\calfw}\ABS{\int f \,\dd Q}, \quad 
\normw{(\dirac_1-\dirac_0)*Q}=\normtv{Q}, \label{e2985839}
\end{gather}
where $\dirac_m\in\calm$ is the Dirac measure at point $m\in\Zpl$, and
$*$ denotes convolution. 
Let $\Po(t)$ be the Poisson distribution with mean $t\in(0,\infty)$.

The literature contains some explicit inequalities for the total 
variation distance between $P^{S_n}$ and the Poisson distribution
with the same mean. We are not aware of any published explicit bounds 
concerning the Wasserstein and local distances in this context. 
However, the local distance was considered by 
\citet[Theorem 2.10]{MR3920366}, but they used a translated Poisson 
distribution to improve the accuracy of approximation and the estimate 
given there is not explicit, since it contains an $O$-term. 

Let us discuss some total variation bounds. 
\citet[Theorem 2.1]{MR385977} adapted Stein's \cite{MR0402873} 
method to prove that, 
for $n\geq 5$, 
\begin{align}
\dtv{P^{S_n}}{\Po(\lambda)}
&\leq 7.875\min\Big\{1,\frac{1}{\sqrt{\lambda}}\Big\}
\Bigl(\sum_{j=1}^n
\overline{p}_{j,\bfcdot}^2
+\sum_{r=1}^n\overline{p}_{\bfcdot,r}^2\Bigr), \label{e228658}\\
\dtv{P^{S_n}}{\Po(\lambda)}
&\leq 22.625 \frac{1}{\lambda}\Bigl(\sum_{j=1}^n
\overline{p}_{j,\bfcdot}^2
+\sum_{r=1}^n\overline{p}_{\bfcdot,r}^2\Bigr).\label{e92163845}
\end{align}
In \citet[Theorem 7.1]{MR0980737} and 
\citet[Theorem 4.A on page 78]{MR1163825} 
the Stein-Chen method and coupling is used to 
show refinements of \eqref{e228658} and \eqref{e92163845} 
in the case that the matrix $(p_{j,r})$ is in $\{0,1\}^{\set{n}\times \set{n}}$. 
We only state the result in \cite{MR1163825}, which improves on that in 
\cite{MR0980737}. It says that
\begin{align}
\dtv{P^{S_n}}{\Po(\lambda)}
&\leq \frac{1-\ee^{-\lambda}}{\lambda}
\Bigl(\frac{n-2}{n}(\lambda-\Var S_n)
+\frac{2\lambda^2}{n}\Bigr) \label{e17514745}\\
&\leq \frac{3}{2}\frac{1-\ee^{-\lambda}}{\lambda}
\Bigl(\sum_{j=1}^n\overline{p}_{j,\bfcdot}^2
+\sum_{r=1}^n\overline{p}_{\bfcdot,r}^2-\frac{2\lambda}{3n}\Bigr).
\label{e346689}
\end{align}
As mentioned in Remark 4.1.3 of \cite{MR1163825}, the proof of 
Theorem 4.A in \cite{MR1163825} can be adapted to prove 
\eqref{e92163845} in the general case 
$(p_{j,r})\in[0,1]^{\set{n}\times \set{n}}$ 
with constant $3/2$ in place of the constant $22.625$. 
Our results below imply that \eqref{e17514745} also holds in the case 
$(p_{j,r})\in[0,1]^{\set{n}\times \set{n}}$; on the other hand, 
\eqref{e346689} has to be slightly adapted, see Remark \ref{r24672345}. 

Under the additional assumption that $p_{j,r}=\bbone_{[-1,a_j]}(r)$ 
for all $j,r\in\set{n}$ with $a_1,\dots,a_n\in\set{n}\cup\{-1,0\}$, 
\eqref{e17514745} can be improved to 
\begin{align}\label{e1721975}
\dtv{P^{S_n}}{\Po(\lambda)}
&\leq (1-\ee^{-\lambda})\Bigl(1-\frac{\Var S_n}{\lambda}\Bigr).
\end{align}
Here, for a set $A$, $\bbone_{A}(x)=1$ if $x\in A$ and 
$\bbone_{A}(x)=0$ otherwise. 
We note that \eqref{e1721975} does not follow 
directly from \eqref{e17514745} or its proof,
but it can be shown again using the Stein-Chen method, 
see \cite[page 80]{MR1163825}. However, more can be said:

\begin{remark}\label{r2230585}
Let the matrix $(p_{j,r})\in[0,1]^{\set{n}\times \set{n}}$ 
have (weakly) decreasing columns, that is 
$p_{j,r}\geq p_{j+1,r}$ for all 
$j\in\set{n-1}$ and $r\in\set{n}$. Then the following is true: 
\begin{enumerate}

\item\label{r2230585.a}
The probability generating function of $P^{S_n}$, namely 
$\psi_{S_n}(z)=\sum_{k=0}^nP(S_n=k)z^k$ for $z\in\CC$, 
has only real roots.

\item\label{r2230585.b}  
The distribution $P^{S_n}$ is a Bernoulli convolution,
that is, it is the distribution of the sum of $n$ independent Bernoulli 
random variables. 

\item\label{r2230585.c} Inequality \eqref{e1721975} holds and 
\begin{align} \label{e23570845}
\frac{1}{14}\min\Bigl\{1,\frac{1}{\lambda}\Bigr\}(\lambda-\Var S_n)
\leq \dtv{P^{S_n}}{\Po(\lambda)}.
\end{align}

\end{enumerate}
\end{remark}
\begin{proof}
Since $\psi_{S_n}(z)$ is equal to the permanent of the matrix 
$(1+p_{j,r}(z-1))\in\CC^{\set{n}\times \set{n}}$ divided by $n!$, 
part (\ref{r2230585.a}) directly follows from the monotone 
column permanent theorem, see \citet{MR3051161}. 
It is well-known that (\ref{r2230585.a}) implies 
(\ref{r2230585.b}), e.g., see \citet[Proposition 1]{MR1429082}. 
Part (\ref{r2230585.c}) is a consequence of (\ref{r2230585.b}),
\citet[Theorem 1]{MR755837} and \cite[Remark 3.2.2]{MR1163825}.
\end{proof}
It is clear that the statement of Remark 
\ref{r2230585} remains valid, if it is assumed that the matrix
$(p_{j,r})$ has decreasing rows, that is, its transpose has decreasing 
columns. The same holds for increasing columns (or rows).
In particular, if $p_{j,1}=\dots=p_{j,n}$ 
for all $j\in\set{n}$, then \eqref{e1721975} and \eqref{e23570845} hold  
and the distribution of $S_n$ is a Bernoulli convolution with success 
probabilities $p_{1,1},\dots,p_{n,1}$.  


Unfortunately, an inequality of the form
\begin{align*} 
\dtv{P^{S_n}}{\Po(\lambda)}
&\leq C\Bigl(1-\frac{\Var S_n}{\lambda}\Bigr)
\end{align*}
with an absolute constant $C\in(0,\infty)$ cannot generally hold. 
Indeed, if $(p_{j,r})\in[0,1]^{\set{n}\times \set{n}}$ is the identity
matrix, then $\Var S_n=1=\lambda$ and $P^{S_n}\neq\Po(\lambda)$;
e.g., see \cite[Example 4.2.1]{MR1163825}. 
Therefore, to get a generally valid upper bound
of $\dtv{P^{S_n}}{\Po(\lambda)}$, 
one has to enlarge $(1-\frac{\Var S_n}{\lambda})$ somewhat. 
In this context,  the following formula for the variance of $S_n$
is useful: 
\begin{align}
\Var S_n
&=\lambda-\sum_{j=1}^n\overline{p}_{j,\bfcdot}^2-\gamma,
\label{e82165}
\end{align}
where 
\begin{align}
\gamma
&=\frac{1}{2n^2(n-1)}\sum_{(j,k)\in\set{n}_{\neq}^2}
\sum_{(r,s)\in\set{n}_{\neq}^2}(p_{j,r}-p_{j,s})(p_{k,r}-p_{k,s}).
\label{e2863576}
\end{align}
For a proof, see Section \ref{s128646}.
Further, we have
\begin{align}
\Var S_n
&=\lambda-\frac{n}{n-1}
\Bigl(\sum_{j\in\set{n}}\overline{p}_{j,\bfcdot}^2
+\sum_{r\in\set{n}}\overline{p}_{\bfcdot,r}^2
-\frac{1}{n^2}\sum_{(j,r)\in\set{n}^2}p_{j,r}^2
-\frac{\lambda^2}{n}\Bigr) \label{e334226}\\
&=\lambda-\frac{1}{n-1} 
\sum_{j\in\set{n}}\sum_{r\in\set{n}}p_{j,r}
\Bigl(\overline{p}_{j,\bfcdot}+\overline{p}_{\bfcdot,r}
-\frac{p_{j,r}}{n}-\overline{p}\Bigr), \nonumber  
\end{align}
from which it easily follows that   
\begin{align}\label{e82166} 
\begin{gathered} 
\lambda(1-m)
\leq 
\Var S_n
\leq \lambda, \quad\text{ where } \\
m=\frac{n}{n-1}
\max_{(j,r)\in\set{n}^2}\Bigl(\overline{p}_{j,\bfcdot}
+\overline{p}_{\bfcdot,r}-\frac{p_{j,r}}{n}-\overline{p}\Bigr)
\leq \min\Bigl\{\lambda,\frac{2n}{n-1}\Bigr\}.
\end{gathered}
\end{align}
Identity \eqref{e334226} and the second inequality in \eqref{e82166}
were proved in \cite[Proposition 4.1.1]{MR1163825}
in the case $(p_{j,r})\in\{0,1\}^{\set{n}\times \set{n}}$
with $\frac{1}{n^2}\sum_{(j,r)\in\set{n}^2}p_{j,r}^2$
replaced by $\overline{p}$.
The proof in the general case is analogous. 
We note that the first inequality in \eqref{e82166} 
can only  be useful if $m<1$. 

The rest of the paper is structured as follows. Sections \ref{s18756}
and \ref{s18647} contain our main approximation inequalities for the 
total variation, Wasserstein and local distances. 
The proofs are given in sections \ref{s2117654} and \ref{s128646}. 
In what follows, let the assumptions of section \ref{s86256} hold 
if not stated otherwise.  
\section{Results for the total variation distance} \label{s18756}
To state our first result, we need further quantities related to 
$\gamma$. Let
\begin{align*}
\gamma'
&=\frac{2}{n^2(n-1)}\sum_{(j,k)\in\set{n}_{\neq}^2}
\sum_{(r,s)\in\set{n}_{\neq}^2}(p_{j,r}-p_{j,s})_+
(p_{k,s}-p_{k,r})_+,\\
\gamma''
&=\frac{1}{2n^2(n-1)}\sum_{(j,k)\in\set{n}_{\neq}^2}
\sum_{(r,s)\in\set{n}_{\neq}^2}\abs{p_{j,r}-p_{j,s}}
\abs{p_{k,r}-p_{k,s}},\\
\gamma'''
&=\frac{1}{4n^2(n-1)}\sum_{(j,k)\in\set{n}_{\neq}^2}
\sum_{(r,s)\in\set{n}_{\neq}^2}
(\abs{p_{j,r}-p_{j,s}}-\abs{p_{k,r}-p_{k,s}})^2.
\end{align*}
Here and below, we set $x_+=\max\{0,x\}$ for $x\in\RR$. 
Our first result improves on \eqref{e17514745}. 
\begin{theorem} \label{t937587}
We have 
\begin{align}
\dtv{P^{S_n}}{\Po(\lambda)}
&\leq \frac{1-\ee^{-\lambda}}{\lambda}(\lambda-\Var S_n+\gamma')
\label{e862146}\\
&=\frac{1-\ee^{-\lambda}}{\lambda}
\Bigl(\sum_{j=1}^n\overline{p}_{j,\bfcdot}^2+\gamma''\Bigr)
\label{e862147}\\
&=\frac{1-\ee^{-\lambda}}{\lambda}
\Bigl(\frac{1}{n}\sum_{(j,r)\in\set{n}^2}p_{j,r}^2-\gamma'''\Bigr).
\label{e862148}
\end{align}
\end{theorem}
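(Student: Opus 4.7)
The proof splits into two conceptually distinct parts: the main probabilistic bound \eqref{e862146} obtained via Stein--Chen, and the algebraic equalities \eqref{e862147}, \eqref{e862148}.

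For the equalities, I would decompose $uv = u_+v_+ + u_-v_- - u_+v_- - u_-v_+$ for $u = p_{j,r}-p_{j,s}$ and $v = p_{k,r}-p_{k,s}$, and exploit the $(r,s)\leftrightarrow(s,r)$ symmetry in the defining sums of $\gamma$, $\gamma'$, $\gamma''$ (which forces $\sum u_+v_+ = \sum u_-v_-$ and $\sum u_+v_- = \sum u_-v_+$). A direct count produces $\gamma+\gamma'=\gamma''$; combined with \eqref{e82165}, this gives \eqref{e862147}. For \eqref{e862148}, I expand $(|u|-|v|)^2 = u^2 - 2|u||v| + v^2$, use $\sum u^2 = \sum v^2$ by symmetry in $(j,k)$, and apply the elementary identity $\sum_{(r,s)\in\set{n}_{\neq}^2}(p_{j,r}-p_{j,s})^2 = 2n\sum_{r\in\set{n}}p_{j,r}^2 - 2n^2\overline{p}_{j,\bfcdot}^2$ to obtain $\gamma''+\gamma''' = \frac{1}{n}\sum_{(j,r)\in\set{n}^2}p_{j,r}^2 - \sum_{j}\overline{p}_{j,\bfcdot}^2$, which rearranges to the desired identity.

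For \eqref{e862146}, I would write $S_n = \sum_{(j,r)\in\set{n}^2}\xi_{j,r}$ with $\xi_{j,r} = X_{j,r}\bbone_{\{\pi(j)=r\}}$ (so $\EE\xi_{j,r}=p_{j,r}/n$) and apply the Stein--Chen coupling inequality
\begin{align*}
\dtv{P^{S_n}}{\Po(\lambda)}\leq \frac{1-\ee^{-\lambda}}{\lambda}\sum_{(j,r)\in\set{n}^2}\frac{p_{j,r}}{n}\,\EE|S_n - W_{j,r}|,
\end{align*}
where $W_{j,r}$ has the conditional distribution of $S_n-1$ given $\xi_{j,r}=1$. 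The natural coupling is the transposition one: given $(X,\pi)$, let $s=\pi(j)$ and $k=\pi^{-1}(r)$, define $\tilde\pi(k)=s$ and $\tilde\pi(i)=\pi(i)$ for $i\notin\{j,k\}$, and set $W_{j,r}=\sum_{i\neq j}X_{i,\tilde\pi(i)}$. A symmetry argument shows that $\tilde\pi$ is uniform on bijections $\set{n}\setminus\{j\}\to\set{n}\setminus\{r\}$, so the required marginal holds. Under this coupling $S_n-W_{j,r}$ equals $X_{j,r}$ on $\{\pi(j)=r\}$ and $X_{j,s}+X_{k,r}-X_{k,s}$ on $\{\pi(j)=s\neq r\}$, both integrable in closed form conditional on $\pi$.

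The main obstacle is to reduce the resulting correction term to exactly $\gamma'$ rather than the coarser expression $\frac{2}{n^2(n-1)}\sum p_{j,r}(1-p_{j,s})(1-p_{k,r})p_{k,s}$ that a naive independent-Bernoulli integration produces. My plan is to exploit the flexibility in the coupling by substituting, in the construction of $W_{j,r}$, a Bernoulli$(p_{k,s})$ variable that is maximum-coupled to $X_{k,r}$ in place of the independent $X_{k,s}$; this is permitted because only the marginal law of $W_{j,r}$ is constrained. This produces factors of the form $(p_{k,s}-p_{k,r})_+$ and, via analogous adjustments for the other row, the complementary $(p_{j,r}-p_{j,s})_+$. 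Combining with the variance identity \eqref{e82165} and the algebraic identity $\gamma+\gamma'=\gamma''$ established above, the correction collapses to $\gamma'$, yielding the bound $\lambda-\Var S_n+\gamma'$. The delicate combinatorial verification that all extraneous contributions cancel is where the principal technical work lies.
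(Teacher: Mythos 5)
Your treatment of the algebraic identities \eqref{e862147} and \eqref{e862148} is correct and follows the paper's own route: the decomposition $\abs{u}\abs{v}-uv=2(u_+v_-+u_-v_+)$ together with $(r,s)\leftrightarrow(s,r)$ symmetry gives $\gamma+\gamma'=\gamma''$, expanding $(\abs{u}-\abs{v})^2$ gives $\gamma''+\gamma'''=\frac{1}{n}\sum_{(j,r)}p_{j,r}^2-\sum_j\overline{p}_{j,\bfcdot}^2$, and combining with \eqref{e82165} finishes; this is exactly the paper's \eqref{e8164}.

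For the main inequality \eqref{e862146}, however, you diverge from the paper and, more importantly, your plan has a genuine gap. The paper works \emph{without} coupling: Proposition \ref{l298757} gives an exact Stein identity $\EE(\mu h(W+1)-Wh(W))=D_1+D_2$, where the key step is to symmetrize the residual $D_2'=\sum_j\EE((\overline{p}_{j,\bfcdot}'-p_{j,\rho(j)})h(W_j+1))$ by replacing $\rho$ with $\rho\circ\tau_{j,k}$ inside the expectation (legitimate by exchangeability of $\rho$); this collapses $D_2'$ to $\frac{1}{2m}\sum_{(j,k)}\EE[(p_{j,\rho(j)}-p_{j,\rho(k)})(p_{k,\rho(j)}-p_{k,\rho(k)})\Delta h(W_{j,k}+1)]$, whose absolute value is bounded by $\norminfty{\Delta g}\,\gamma''$, and then \eqref{e82165} and $\gamma+\gamma'=\gamma''$ give \eqref{e862146}. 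Your plan instead uses the coupling inequality with the transposition coupling, which is precisely what produces the weaker bound \eqref{e17514745} in \cite{MR1163825}. The proposed fix — maximum-coupling a $\Be(p_{k,s})$ to $X_{k,r}$ inside $W_{j,r}$ — cannot close the gap: carrying it out gives, after $(r,s)$-symmetrization, a $D_2$-type term proportional to $\sum\sum\abs{p_{k,r}-p_{k,s}}\bigl(p_{j,s}(1-p_{j,r})+p_{j,r}(1-p_{j,s})\bigr)$, and the factor $p_{j,s}(1-p_{j,r})+p_{j,r}(1-p_{j,s})$ dominates (and is generally strictly larger than) the $\abs{p_{j,r}-p_{j,s}}$ needed for $\gamma''$. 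There is no coupling freedom left to improve that factor, because $X_{j,s}$ sits in $S_n$ (whose law is what we are approximating) while $W_{j,r}$ contains no variable from row $j$; only the law of $W_{j,r}$ is at your disposal. The "delicate combinatorial verification" you defer is therefore not merely remaining bookkeeping — as proposed, the construction yields a bound strictly weaker than \eqref{e862146}, and the $j\leftrightarrow k$ symmetrization inside the Stein identity is the missing idea that the coupling route does not supply.
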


Let us compare the inequality in Theorem \ref{t937587} 
with \eqref{e17514745}. We first note that 
the upper bound in Theorem~\ref{t937587} is always smaller than 
or equal to the right-hand side of \eqref{e17514745}. However, 
both upper bounds are of the same order. This follows from the 
next lemma. 
\begin{lemma}\label{l32864}
Let $A=\lambda-\Var S_n+\gamma'$, 
$B=\frac{n-2}{n}(\lambda-\Var S_n)+\frac{2\lambda^2}{n}$. Then we have 
\begin{align}\label{e98256}
A\leq B\leq \Bigl(3-\frac{2}{n}\Bigr) A. 
\end{align}
\end{lemma}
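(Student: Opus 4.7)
The plan is to reduce both inequalities to concrete algebraic statements using two identities already at our disposal. Abbreviating $T=\sum_{j=1}^n\overline{p}_{j,\bfcdot}^2$, identity \eqref{e82165} gives $\lambda-\Var S_n=T+\gamma$, while comparing \eqref{e862146} and \eqref{e862147} yields $A=T+\gamma''$ and (implicitly) $\gamma''=\gamma+\gamma'$. Substituting these into the definitions of $A$ and $B$ and collecting terms, I would obtain
\begin{align*}
B-A
&=-\frac{2}{n}T+\frac{n-2}{n}\gamma-\gamma''+\frac{2\lambda^2}{n}
=\frac{2}{n}(\lambda^2-T-\gamma)-\gamma', \\
\Bigl(3-\frac{2}{n}\Bigr)A-B
&=2A+\frac{n-2}{n}\gamma'-\frac{2\lambda^2}{n},
\end{align*}
so \eqref{e98256} becomes a pair of nonnegativity claims for these two expressions.

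For the lower bound $A\leq B$, the key is to rewrite $\lambda^2-T-\gamma$ as a single double sum. Either using \eqref{e334226} or expanding $\gamma$ directly from \eqref{e2863576} and regrouping, one can verify the identity
\begin{align*}
\lambda^2-T-\gamma=\frac{1}{n(n-1)}\sum_{(j,k)\in\set{n}_{\neq}^2}\sum_{(r,s)\in\set{n}_{\neq}^2}p_{j,r}p_{k,s}.
\end{align*}
Combined with the definition of $\gamma'$, which has the matching prefactor $\frac{2}{n^2(n-1)}$, this yields
\begin{align*}
B-A=\frac{2}{n^2(n-1)}\sum_{(j,k)\in\set{n}_{\neq}^2}\sum_{(r,s)\in\set{n}_{\neq}^2}\bigl[p_{j,r}p_{k,s}-(p_{j,r}-p_{j,s})_+(p_{k,s}-p_{k,r})_+\bigr].
\end{align*}
Each summand is nonnegative because $p_{j,s},p_{k,r}\in[0,1]$ imply $0\leq(p_{j,r}-p_{j,s})_+\leq p_{j,r}$ and $0\leq(p_{k,s}-p_{k,r})_+\leq p_{k,s}$, so the product of the two positive parts is dominated by $p_{j,r}p_{k,s}$.

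For the upper bound, I would argue more directly: from $A=T+\gamma''\geq T$ and the Cauchy--Schwarz inequality $T=\sum_j\overline{p}_{j,\bfcdot}^2\geq\lambda^2/n$, we get $2A\geq 2\lambda^2/n$, and since $\gamma'\geq 0$ and $n\geq 2$, the displayed expression for $(3-2/n)A-B$ is nonnegative. The main obstacle in the whole argument is the double-sum identity for $\lambda^2-T-\gamma$: unwinding $(p_{j,r}-p_{j,s})(p_{k,r}-p_{k,s})$ into its four summands and reconciling the constraint $(r,s),(j,k)\in\set{n}_{\neq}^2$ against the unrestricted product $\overline{p}_{j,\bfcdot}\overline{p}_{k,\bfcdot}$ is routine but somewhat delicate, essentially the same bookkeeping that underlies \eqref{e334226}.
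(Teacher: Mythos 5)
Your proof is correct and follows essentially the same route as the paper: the inequality $A\le B$ reduces to $\gamma'\le\frac{2}{n}(\Var S_n-\lambda+\lambda^2)$, which the paper records as Lemma~\ref{l387256}(\ref{l387256.b}) and proves by exactly the term-by-term bound $(p_{j,r}-p_{j,s})_+ (p_{k,s}-p_{k,r})_+\le p_{j,r}p_{k,s}$ that you use inline, followed by the same evaluation of the resulting double sum; and the upper bound uses the same ingredients (Cauchy--Schwarz for $\lambda^2\le nT$, $\gamma''=\gamma+\gamma'\ge0$, $\gamma'\ge0$) with only trivially different bookkeeping.
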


\begin{remark}
In view of \eqref{e862148}, we see that
the bound in Theorem \ref{t937587} is always bounded by 
$1-\ee^{-\lambda}$.  
\end{remark}
\begin{example} 
The right-hand side in \eqref{e17514745} is not always bounded by $1$ 
and it is possible that in the second inequality in \eqref{e98256} 
equality holds. To show this, 
let $p_{j,r}=p\in[0,1]$ for all $j,r\in\set{n}$. Then $P^{S_n}$ 
is the binomial distribution with parameters $n$ and $p$. Further,
$\lambda=np$ and inequality \eqref{e17514745} states that 
\begin{align}\label{e334867}
\dtv{P^{S_n}}{\Po(np)}
&\leq \Bigl(3-\frac{2}{n}\Bigr)(1-\ee^{-np}) p.
\end{align}
On the other hand, Theorem \ref{t937587} 
gives $\dtv{P^{S_n}}{\Po(np)}\leq (1-\ee^{-np})p$. 
In particular, in the second inequality in \eqref{e98256} 
equality holds. If $p>\frac{1}{3}$, then the 
upper bound in \eqref{e334867} is bigger than $1$ for 
$n$ large enough.
\end{example}

\begin{remark}  
If the matrix $(p_{j,r})$ has decreasing rows, then $\gamma'=0$. 
In this case, the inequalities in Theorem \ref{t937587} and 
\eqref{e1721975} are identical.  
We recall that \eqref{e17514745} does not imply \eqref{e1721975}. 
\end{remark}


\begin{remark}  
The distribution of $S_n$ remains unchanged if we replace the matrix 
$(p_{j,r})$ with its transpose. In the case 
$(p_{j,r})\in\{0,1\}^{\set{n}\times \set{n}}$, both upper bounds in 
\eqref{e17514745} and Theorem \ref{t937587} remain unchanged as well, 
see Lemma \ref{l387256}(\ref{l387256.a}) below.  
However, if we consider the general case 
$(p_{j,r})\in[0,1]^{\set{n}\times \set{n}}$, 
the bound in Theorem \ref{t937587} can indeed change.  
So, in this case, we possibly get two different inequalities, 
the better of which should be used. 
For instance, let 
$n=2$ and 
$(p_{j,r})
=\begin{psmallmatrix}
1 & 1/4 \\
3/4 & 1/2 
\end{psmallmatrix}$. Then we have $\gamma'=0$, 
but the analogous term for the transpose of $(p_{j,r})$ is equal to
\begin{align*}
\frac{2}{n^2(n-1)}\sum_{(j,k)\in\set{n}_{\neq}^2}
\sum_{(r,s)\in\set{n}_{\neq}^2}(p_{j,r}-p_{k,r})_+
(p_{k,s}-p_{j,s})_+ 
&=\frac{1}{16}.
\end{align*}
\end{remark}


Let us now collect some properties of $\gamma'$. 
\pagebreak
\begin{lemma}\label{l387256}\rule{0pt}{0pt}
\begin{enumerate}

\item \label{l387256.a} We have 
\begin{align}\label{e1876185}
\gamma'
\leq \frac{2}{n^2(n-1)}\sum_{(j,k)\in\set{n}_{\neq}^2}
\sum_{(r,s)\in\set{n}_{\neq}^2}p_{j,r}(1-p_{j,s})p_{k,s}(1-p_{k,r}).
\end{align}
Here, equality holds if and only if for all $j\in\set{n}$ and 
$(r,s)\in\set{n}_{\neq}^2$ such that $(p_{j,r},p_{j,s})\in(0,1)^2$, we 
have $p_{k,r}=p_{k,s}\in\{0,1\}$ for all $k\in\set{n}\setminus\{j\}$.
In particular, equality holds if in every row of the matrix $(p_{j,r})$ 
there is at most one entry in $(0,1)$. 

The right-hand side of \eqref{e1876185} does not change 
if we replace the matrix $(p_{j,r})$ with its transpose. 

\item \label{l387256.b}
We have
\begin{align}\label{e11864820}
\gamma'
\leq \min\Big\{\Var S_n
-\frac{1}{n}\sum_{(j,r)\in\set{n}^2}p_{j,r}(1-p_{j,r}),
\frac{2}{n}(\Var S_n-\lambda+\lambda^2)\Big\}.
\end{align}

\item 
If $(p_{j,r})\in\{0,1\}^{\set{n}\times \set{n}}$, then
\begin{align*}
\gamma'
&=\frac{2}{n-1}\sum_{(r,s)\in\set{n}_{\neq}^2}
\Bigl(\overline{p}_{\bfcdot,r}
-\frac{1}{n}\sum_{j\in\set{n}}p_{j,r}p_{j,s}\Bigr)
\Bigl(\overline{p}_{\bfcdot,s}
-\frac{1}{n}\sum_{j\in\set{n}}p_{j,r}p_{j,s}\Bigr).
\end{align*}

\end{enumerate}
\end{lemma}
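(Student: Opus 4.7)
My approach is to handle the three parts of Lemma~\ref{l387256} in order, each by elementary manipulations starting from the definition of $\gamma'$. For part (\ref{l387256.a}), the engine is the pointwise inequality $(p_{j,r}-p_{j,s})_+ \leq p_{j,r}(1-p_{j,s})$, which I would verify by case analysis: if $p_{j,r}\geq p_{j,s}$ it reduces to $p_{j,s}(1-p_{j,r})\geq 0$, and if $p_{j,r}<p_{j,s}$ the left-hand side vanishes. Multiplying the analogous bound for $(p_{k,s}-p_{k,r})_+$ and summing over $(j,k)\in\set{n}_{\neq}^2$ and $(r,s)\in\set{n}_{\neq}^2$ yields \eqref{e1876185}. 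The same case analysis shows that the pointwise inequality is strict precisely when $(p_{j,r},p_{j,s})\in(0,1)^2$. The forward direction of the equality characterization then follows term-by-term: if $(p_{j,r},p_{j,s})\in(0,1)^2$ the hypothesis forces $p_{k,r}=p_{k,s}\in\{0,1\}$, making both $(p_{k,s}-p_{k,r})_+$ and $p_{k,s}(1-p_{k,r})$ equal to $0$; otherwise the pointwise inequality is already an equality. For the converse I would assume the row condition fails at some $(j^\ast,r^\ast,s^\ast,k^\ast)$ and exhibit a strictly contributing term, considering both $(j^\ast,k^\ast,r^\ast,s^\ast)$ and $(j^\ast,k^\ast,s^\ast,r^\ast)$ to rule out the several ways $p_{k^\ast,r^\ast}$ and $p_{k^\ast,s^\ast}$ can fail to agree in $\{0,1\}$. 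The ``in particular'' statement renders the row-condition vacuous. Transposition invariance of the right-hand side of \eqref{e1876185} is pure relabelling: substituting $p_{j,r}\leftrightarrow p_{r,j}$ and renaming the summation variables permutes the four factors $p_{j,r},1-p_{j,s},p_{k,s},1-p_{k,r}$ without changing their product.

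For part (\ref{l387256.b}), I would first derive the identity $\gamma''=\gamma+\gamma'$ from $\abs{xy}-xy = 2(x_+y_-+x_-y_+)$ with $x=p_{j,r}-p_{j,s}$ and $y=p_{k,r}-p_{k,s}$, exploiting the symmetry of the summation under $r\leftrightarrow s$ to identify $\sum x_+y_-$ with $\sum x_-y_+$. Combined with \eqref{e82165} this gives
\[
\gamma' = \gamma''+\sum_{j=1}^n\overline{p}_{j,\bfcdot}^2+\Var S_n-\lambda.
\]
The first inequality in \eqref{e11864820} then reduces, via the identity $\sum_r(p_{j,r}-\overline{p}_{j,\bfcdot})^2=\frac{1}{2n}\sum_{(r,s)\in\set{n}_{\neq}^2}(p_{j,r}-p_{j,s})^2$, to showing $\gamma''\leq\frac{1}{2n^2}\sum_{j\in\set{n}}\sum_{(r,s)\in\set{n}_{\neq}^2}(p_{j,r}-p_{j,s})^2$; this in turn follows from Cauchy--Schwarz, $(\sum_j a_j)^2\leq n\sum_j a_j^2$, applied with $a_j=\abs{p_{j,r}-p_{j,s}}$ for each fixed $(r,s)$ and then summed. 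For the second inequality I would use the crude pointwise estimates $(p_{j,r}-p_{j,s})_+\leq p_{j,r}$ and $(p_{k,s}-p_{k,r})_+\leq p_{k,s}$, expand $\sum_{(j,k)\in\set{n}_{\neq}^2}\sum_{(r,s)\in\set{n}_{\neq}^2}p_{j,r}p_{k,s}$ by inclusion--exclusion in terms of $\lambda^2$, $\sum_j\overline{p}_{j,\bfcdot}^2$, $\sum_r\overline{p}_{\bfcdot,r}^2$ and $\sum p_{j,r}^2$, and then invoke \eqref{e334226} to rewrite the result as $n(n-1)(\Var S_n-\lambda+\lambda^2)$; multiplication by $\frac{2}{n^2(n-1)}$ produces the asserted prefactor $\frac{2}{n}$.

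For part (\ref{l387256.c}), in the $\{0,1\}$ setting the pointwise inequality from part (\ref{l387256.a}) becomes an equality, so \eqref{e1876185} holds with equality for $\gamma'$. For fixed $(r,s)\in\set{n}_{\neq}^2$ I would split $\sum_{(j,k)\in\set{n}_{\neq}^2}=\sum_{j,k}-\sum_{j=k}$; the diagonal vanishes because $p_{j,r}(1-p_{j,r})=0$, and the off-diagonal part factors as $\bigl(\sum_j p_{j,r}(1-p_{j,s})\bigr)\bigl(\sum_k p_{k,s}(1-p_{k,r})\bigr) = n^2(\overline{p}_{\bfcdot,r}-\tfrac{1}{n}\sum_j p_{j,r}p_{j,s})(\overline{p}_{\bfcdot,s}-\tfrac{1}{n}\sum_j p_{j,r}p_{j,s})$, yielding the claimed identity. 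The step I expect to be the main obstacle is the equality characterization in (\ref{l387256.a}): the condition is stated from the viewpoint of a single row $j$ but constrains all other rows $k$, so disqualifying a putative failure requires inspecting several tuples of the summation simultaneously rather than relying on a single pointwise comparison.
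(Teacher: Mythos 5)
Your proposal follows essentially the same route as the paper for all three parts: the pointwise bound $(a-b)_+\leq a(1-b)$ with its equality characterization in part (a), the crude estimate $(p_{j,r}-p_{j,s})_+(p_{k,s}-p_{k,r})_+\leq p_{j,r}p_{k,s}$ followed by inclusion--exclusion and \eqref{e334226} for the second half of part (b), and the diagonal-vanishing-plus-factorization argument in part (c). The one place you take a slightly different path is the first inequality of part (b): you prove $\gamma''\leq\frac{1}{n}\sum p_{j,r}^2-\sum\overline{p}_{j,\bfcdot}^2$ directly via Cauchy--Schwarz over $j$, whereas the paper reads it off from the already-established identity $\gamma''+\gamma'''=\frac{1}{n}\sum p_{j,r}^2-\sum\overline{p}_{j,\bfcdot}^2$ in \eqref{e8164} together with $\gamma'''\geq 0$; these are algebraically the same observation ($2|ab|\leq a^2+b^2$), so this is a bookkeeping difference rather than a new idea. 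One small thing to tidy in a full write-up of part (a)'s forward direction: the "otherwise" branch (i.e.\ $(p_{j,r},p_{j,s})\notin(0,1)^2$) only makes the first factor an equality; if the second pair $(p_{k,r},p_{k,s})\in(0,1)^2$, you still need to invoke the hypothesis with $k$ in the role of $j$ to zero out the first factor, which you do implicitly by the symmetry between $j$ and $k$ but don't say. You correctly anticipated that the equality characterization would be the delicate step.
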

\begin{remark}\label{r24672345}
We note that \eqref{e862146} together with 
the second entry in the right-hand side of \eqref{e11864820} 
can be used to show \eqref{e17514745} in the general case 
$p_{j,r}\in[0,1]^{\set{n}\times \set{n}}$.
Further, an analogue of 
\eqref{e346689} can be shown: in view of \eqref{e17514745}, 
\eqref{e334226} and \eqref{e82166}, we see that 
\begin{align*}
\frac{3}{2}\frac{1-\ee^{-\lambda}}{\lambda}
\Bigl(\sum_{j=1}^n\overline{p}_{j,\bfcdot}^2
+\sum_{r=1}^n\overline{p}_{\bfcdot,r}^2
-\frac{2}{3n^2}\sum_{(j,r)\in\set{n}^2}p_{j,r}^2\Bigr)
\end{align*}
is larger than or equal to the right-hand side of \eqref{e17514745}. 
\end{remark}
\begin{example}
The left entry $L$ and the right entry $R$ in the minimum term in 
\eqref{e11864820} are not comparable in general. 
\begin{enumerate}

\item If $p_{j,r}=1$ for all $j,r\in\set{n}$, 
then $\lambda=n$ and $\Var S_n=0$, that is $L=0<2(n-1)=R$.

\item On the other hand, if
$(p_{j,r})\in[0,1]^{\set{n}\times \set{n}}$ is the identity matrix, then 
$\Var S_n =1=\lambda$; e.g., see \cite[Example 4.2.1]{MR1163825}.
In this case, we have $R=\frac{2}{n}\leq1=L$. 

\end{enumerate}
\end{example}
\begin{remark}\label{r235246346}
Let us discuss conditions for the smallness of the total variation 
distance between $P^{S_n}$ and $\Po(\lambda)$. 
For this, we consider a triangular scheme, where $n$,
all  $X_{j,r}$ and  $p_{j,r}$,  $\pi$, and in turn  
$\lambda$, $\gamma'$, and $S_n$ depend on a further variable $k\in\NN$,
which we let go to infinity later. 
In order to simplify the notation, we do not indicate 
the dependence on $k$. 
In \citet[Corollary 4.A.1]{MR1163825}, it was shown that,
under the assumptions 
\begin{align}\label{e23578}
(p_{j,r})\in\{0,1\}^{n\times n},\quad 
\lim_{k\to\infty}\frac{\lambda}{n}=0 
\quad \text{and}\quad 
\liminf_{k\to\infty} \lambda>0,
\end{align}
we have 
\begin{align}\label{e37477}
\dtv{P^{S_n}}{\Po(\lambda)}\to0 \quad
\text{if and only if} \quad \frac{\Var S_n}{\lambda}\to1. 
\end{align}
However, since \eqref{e17514745} also holds in the case 
$(p_{j,r})\in[0,1]^{n\times n}$ according to Remark \ref{r24672345}, 
the condition $(p_{j,r})\in\{0,1\}^{n\times n}$ in \eqref{e23578} 
can be dropped. Further, a slight refinement of the proof in 
\cite{MR1163825} shows that the condition 
$\liminf_{k\to\infty} \lambda>0$ in \eqref{e23578} can be dropped as 
well. In fact, sufficiency follows from \eqref{e17514745}. 
Further, Theorem 3.A in \cite{MR1163825} shows that 
if $\dtv{P^{S_n}}{\Po(\lambda)}\to0$, then 
$\min\{1,\lambda\}(1-\frac{\Var S_n}{\lambda})\to 0$. 
The first and the third inequality in \eqref{e82166} imply that 
$\lambda-\Var S_n\leq \lambda^2$, leading to 
$(1-\frac{\Var S_n}{\lambda})^2\leq 
\min\{1,\lambda\}(1-\frac{\Var S_n}{\lambda})\to 0$. 
So, the condition $\liminf_{k\to\infty} \lambda>0$
is not needed here.  
If we use  \eqref{e862146}, instead of \eqref{e17514745}, in the 
sufficiency part, we obtain the following corollary without
using the assumptions in \eqref{e23578}. 
\end{remark}

\begin{corollary}\label{c223865}
Consider the triangular scheme above. 
If $\frac{\gamma'}{\lambda}\to0$, then \eqref{e37477} holds. 
\end{corollary}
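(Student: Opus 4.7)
The plan is to verify both directions of \eqref{e37477} under the single hypothesis $\gamma'/\lambda\to 0$; in fact only the sufficiency direction invokes that hypothesis, while necessity holds in the general triangular scheme.

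For sufficiency, I would apply the improved bound \eqref{e862146} of Theorem \ref{t937587} and split its right-hand side into two terms:
\begin{align*}
\dtv{P^{S_n}}{\Po(\lambda)}
\leq \frac{1-\ee^{-\lambda}}{\lambda}(\lambda-\Var S_n+\gamma')
= (1-\ee^{-\lambda})\Bigl(1-\frac{\Var S_n}{\lambda}\Bigr)
+\frac{(1-\ee^{-\lambda})\gamma'}{\lambda}.
\end{align*}
Using $1-\ee^{-\lambda}\leq 1$, the first summand is bounded by $1-\Var S_n/\lambda$ and the second by $\gamma'/\lambda$; both vanish in the limit under the assumed convergence $\gamma'/\lambda\to 0$ combined with $\Var S_n/\lambda\to 1$. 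The point of using \eqref{e862146} instead of \eqref{e17514745} is precisely that this removes the need for the two auxiliary conditions appearing in \eqref{e23578}.

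For necessity, no hypothesis on $\gamma'$ is needed, and I would follow the route sketched in Remark \ref{r235246346}. Theorem 3.A of \cite{MR1163825} supplies a lower bound of the form $\min\{1,\lambda\}(1-\Var S_n/\lambda)\leq C\,\dtv{P^{S_n}}{\Po(\lambda)}$, so that $\dtv{P^{S_n}}{\Po(\lambda)}\to 0$ already forces $\min\{1,\lambda\}(1-\Var S_n/\lambda)\to 0$. Combining the first inequality of \eqref{e82166} with the upper bound $m\leq\lambda$ in the same display yields $\lambda-\Var S_n\leq\lambda^2$, equivalently $1-\Var S_n/\lambda\leq\min\{1,\lambda\}$. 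Squaring this and substituting,
\begin{align*}
\Bigl(1-\frac{\Var S_n}{\lambda}\Bigr)^2
\leq \min\{1,\lambda\}\Bigl(1-\frac{\Var S_n}{\lambda}\Bigr)\longrightarrow 0,
\end{align*}
which forces $\Var S_n/\lambda\to 1$. The argument is essentially an assembly of Theorem \ref{t937587}, the inequalities in \eqref{e82166} and Theorem 3.A of \cite{MR1163825}; no real obstacle is anticipated. The only mildly delicate point is to confirm that \eqref{e82166} is available in the full generality $(p_{j,r})\in[0,1]^{n\times n}$, which is already noted in the discussion immediately following that display.
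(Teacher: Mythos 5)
Your argument is correct and matches the paper's intended proof exactly: the paper (via Remark~\ref{r235246346}) also derives sufficiency from \eqref{e862146} by splitting off the $\gamma'/\lambda$ term, and necessity from Theorem 3.A of \cite{MR1163825} combined with the bound $\lambda-\Var S_n\leq\lambda^2$ extracted from \eqref{e82166}.
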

\begin{remark}  
In the situation of Corollary \ref{c223865}, 
it follows from \eqref{e11864820} and \eqref{e82166} that 
$\frac{\gamma'}{\lambda}\leq 2\frac{\lambda}{n}$. This implies 
that the assumption $\frac{\gamma'}{\lambda}\to0$ 
is weaker than $\frac{\lambda}{n}\to 0$. In particular, 
if the matrices $(p_{j,r})$ have decreasing rows, we have $\gamma'=0$
for all $k$, so that  \eqref{e37477} holds. 
\end{remark}

In the next theorem, we consider the approximation of $P^{S_n}$ by the  
finite signed measure $Q_2$ concentrated on $\Zpl$ with
\begin{align}
\begin{split}
Q_2(A)
&=\Po(\lambda)(A)-\frac{1}{2}(\lambda-\Var S_n) 
((\dirac_1-\dirac_0)^{*2}*\Po(\lambda))(A) \\
&=\sum_{k\in A} \ee^{-\lambda}\frac{\lambda^k}{k!}
\Bigl(1-\frac{1}{2\lambda^2}(\lambda-\Var S_n )(\lambda^2-2k\lambda
+k(k-1)) \Bigr) \quad \text{ for } A\subseteq\Zpl. 
\end{split}\label{e22876346}
\end{align}
Comparable approximations in the case of independent summands 
were considered by \citet{MR0165555}, \citet{MR0428387}, 
\citet{MR0458544}, \citet{MR755837}, \citet{MR1735783} and others. 
\begin{theorem}\label{t2987658}
Let $n\geq 4$. Then
\begin{align}\label{e28649837}
\dtv{P^{S_n}}{Q_2}
\leq \frac{1-\ee^{-\lambda}}{\lambda} \varepsilon,
\end{align}
where  
\begin{align*}
\varepsilon
&=\min\Bigl\{1,\sqrt{\frac{2}{\lambda\ee}}\Bigr\}
(\varepsilon_1+\varepsilon_2) 
+\frac{1-\ee^{-\lambda}}{\lambda}
\varepsilon_3,\\
\varepsilon_1
&=\frac{2}{n} \sum_{j=1}^n\sum_{r=1}^n\overline{p}_{j,\bfcdot}p_{j,r}
\abs{\lambda-\lambda_{j,r}'},\\
\varepsilon_2
&=\frac{1}{n^2(n-1)}
\sum_{(j,k)\in\set{n}_{\neq}^2}\sum_{(r,s)\in\set{n}_{\neq}^2}
\abs{p_{j,r}-p_{j,s}} \abs{p_{k,r}-p_{k,s}}
\abs{\lambda-\lambda_{j,k,r,s}''}, \\
\varepsilon_3
&= \frac{2n^2}{(n-2)^2}
\Bigl(\frac{n-2}{n-1}\sum_{j=1}^n\overline{p}_{j,\bfcdot}^2
+\sqrt{\frac{n-1}{n-3}}\gamma''\Bigr)^2
\end{align*}
and, for $(j,k),(r,s)\in\set{n}_{\neq}^2$, 
\begin{align*}
\lambda_{j,r}'
&=\frac{1}{n-1}\sum_{u\in\set{n}\setminus\{j\}}
\sum_{v\in\set{n}\setminus\{r\}}p_{u,v}
=\frac{1}{n-1}(n(\lambda-\overline{p}_{j,\bfcdot}
-\overline{p}_{\bfcdot,r})+p_{j,r}),\\
\lambda_{j,k,r,s}''
&=\frac{1}{n-2}\sum_{u\in\set{n}\setminus\{j,k\}}
\sum_{v\in\set{n}\setminus\{r,s\}}p_{u,v}\\
&=\frac{1}{n-2}(n(\lambda-\overline{p}_{j,\bfcdot}
-\overline{p}_{k,\bfcdot}
-\overline{p}_{\bfcdot,r}-\overline{p}_{\bfcdot,s})
+p_{j,r}+p_{k,r}+p_{j,s}+p_{k,s}).
\end{align*}
\end{theorem}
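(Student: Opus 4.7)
The plan is to run the Stein-Chen method on $S_n$ with a second-order expansion adapted to the signed measure $Q_2$. For $A\subseteq\Zpl$, let $f=f_A$ solve the Poisson Stein equation $\lambda f(m+1)-mf(m)=\bbone_A(m)-\Po(\lambda)(A)$; standard Stein-factor bounds (cf.~\cite[Ch.~1]{MR1163825}) give $\norminfty{\Delta f}\leq(1-\ee^{-\lambda})/\lambda$ and $\norminfty{\Delta^2 f}\leq\min\{1,\sqrt{2/(\lambda\ee)}\}\,(1-\ee^{-\lambda})/\lambda$. Since $Q_2(A)-\Po(\lambda)(A)=-\tfrac12(\lambda-\Var S_n)\Po(\lambda)(\Delta^2\bbone_A)$ by \eqref{e22876346}, the Stein equation yields
\begin{align*}
P^{S_n}(A)-Q_2(A)=\EE\bigl[\lambda f(S_n+1)-S_nf(S_n)\bigr]+\tfrac12(\lambda-\Var S_n)\Po(\lambda)(\Delta^2\bbone_A),
\end{align*}
so it suffices to bound the right-hand side by $\tfrac{1-\ee^{-\lambda}}{\lambda}\varepsilon$ uniformly in $A$.

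The analysis proceeds via the permutation-based coupling. Conditioning on $\pi(j)=r$ (which has probability $1/n$) yields $\EE[X_{j,\pi(j)}g(S_n)]=\sum_{r}\tfrac{p_{j,r}}{n}\EE[g(\tilde S^{(j,r)}+1)]$ for any bounded $g$, where $\tilde S^{(j,r)}=\sum_{u\neq j}X_{u,\sigma(u)}$ with $\sigma$ a uniform bijection $\set{n}\setminus\{j\}\to\set{n}\setminus\{r\}$ independent of the $X_{u,v}$. Combined with $\lambda=\sum_{j,r}p_{j,r}/n$, this rewrites the Stein discrepancy as $\sum_{j,r}\tfrac{p_{j,r}}{n}\EE[f(S_n+1)-f(\tilde S^{(j,r)}+1)]$; iterating on $\tilde S^{(j,r)}$ (by further conditioning on $\pi(k)=s$ with $(j,k),(r,s)\in\set{n}_{\neq}^2$) generates the nested sums responsible for $\gamma''$ and $\varepsilon_2$. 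The quantities $\lambda'_{j,r}$ and $\lambda''_{j,k,r,s}$ are precisely the means of the reduced sums at each level, and the hypothesis $n\geq 4$ is needed so that the double reduction is well defined and $n-3$ does not vanish in $\varepsilon_3$. A discrete Taylor expansion $f(a+1)-f(b+1)=(a-b)\Delta f(b+1)+R_2(a,b)$ with $|R_2|\leq\tfrac12(a-b)^2\norminfty{\Delta^2 f}$ then separates first-order from second-order contributions.

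The first-order contribution is reshaped by a second use of the coupling and, invoking the variance identity \eqref{e82165} and the alternative form \eqref{e862147}, it reproduces $-\tfrac12(\lambda-\Var S_n)\Po(\lambda)(\Delta^2\bbone_A)$ up to two Poisson-type errors: replacing $\lambda$ by $\lambda'_{j,r}$ at the single reduction and by $\lambda''_{j,k,r,s}$ at the double reduction contributes $\varepsilon_1$ and $\varepsilon_2$ respectively, each multiplied by $\norminfty{\Delta^2 f}$. The second-order remainders from the Taylor expansion aggregate into $\varepsilon_3$ via a Cauchy-Schwarz step: the squared first-order coupling differences are controlled by $\sum_j\overline p_{j,\bfcdot}^2$ and $\gamma''$ (cf.~\eqref{e862147}), a variance estimate for a uniform random pair from $\set{n}_{\neq}^2$ produces the combinatorial weight $\sqrt{(n-1)/(n-3)}$ and the prefactor $\tfrac{2n^2}{(n-2)^2}$, while the outer $\norminfty{\Delta f}$ bound contributes the extra $(1-\ee^{-\lambda})/\lambda$. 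The main obstacle I foresee is the bookkeeping of the double coupling: the symmetries in $(j,k)$ and $(r,s)$ must be exploited to collapse the iterated sums exactly to $\tfrac12(\lambda-\Var S_n)$ via \eqref{e82165} rather than to a merely comparable quantity, and the sharp constants in $\varepsilon_3$ require careful tracking of how the uniform measure on $\set{n}_{\neq}^2$ differs from the product measure on $\set{n}^2$.
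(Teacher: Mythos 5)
Your proposal takes a genuinely different route from the paper, and there are gaps that prevent it from working as written.

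First, the structure is different: you propose a discrete Taylor expansion $f(a+1)-f(b+1)=(a-b)\Delta f(b+1)+R_2(a,b)$ whose remainder is controlled by a second-order Stein factor $\norminfty{\Delta^2 f}$. The paper instead uses a \emph{nested first-order} Stein--Chen argument. Concretely, the paper first establishes the exact identity (Proposition~\ref{l298757}) $\EE(\mu h(W+1)-Wh(W))=D_1+D_2$ with $D_1,D_2$ involving $\Delta h$ evaluated at the once- and twice-reduced sums $W_j,\,W_{j,k}$. Combined with the identity
\begin{align*}
\int h\,\dd\bigl((\dirac_1-\dirac_0)^{*2}*\Po(\lambda)\bigr)=-2\,\EE\bigl(\Delta g(Y+1)\bigr),
\end{align*}
this lets \eqref{e22876346}, \eqref{e82165} and \eqref{e2863576} rewrite $\EE h(S_n)-\int h\,\dd Q_2$ as a sum over $(j,r)$ and $(j,k,r,s)$ of $\EE_\ell(\Delta g(T+1)-\Delta g(Y+1))$. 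Each such difference is then converted into $\sum_m(P(T'=m)-P(Y=m))\Delta g(m+1)$, bounded by $2\norminfty{\Delta g}\dtv{P^{T'}}{\Po(\lambda)}$, and $\dtv{P^{T'}}{\Po(\lambda)}$ is estimated by re-applying Corollary~\ref{c8263895} to the reduced random variable. You never derive an analogue of this exact $D_1+D_2$ identity, and without it your ``first-order contribution reproduces $Q_2$'' step is not justified.

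Second, you assign the wrong Stein factor to $\varepsilon_1,\varepsilon_2$: in the paper the factor $\min\{1,\sqrt{2/(\lambda\ee)}\}$ multiplying $\varepsilon_1+\varepsilon_2$ is the bound on $\norminfty{g}$ from the \emph{inner} Stein--Chen estimate (the mean-shift term $\abs{t-\mu}\norminfty{g}$), not a bound on $\norminfty{\Delta^2 g}$. Your claimed bound $\norminfty{\Delta^2 f}\leq\min\{1,\sqrt{2/(\lambda\ee)}\}(1-\ee^{-\lambda})/\lambda$ is not one of the standard Poisson Stein factors and would need independent justification; the paper never uses any second-order factor at all. Third, the specific form of $\varepsilon_3$ --- the prefactor $\tfrac{2n^2}{(n-2)^2}$, the weights $\tfrac{n-2}{n-1}$ and $\sqrt{\tfrac{n-1}{n-3}}$, and the square --- arises in the paper from applying Corollary~\ref{c8263895} to sums over $n-1$ and $n-2$ indices, giving denominators $n-2$ and $n-3$; these combinatorial weights come from the sizes of the reduced index sets, not from a Cauchy--Schwarz step as you suggest. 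The $n\geq 4$ hypothesis is exactly what keeps the twice-reduced application of Corollary~\ref{c8263895} meaningful. In summary: the coupling/conditioning skeleton you describe could in principle be made precise, but the Taylor-remainder route with a second-order Stein factor is not the paper's argument, the factor you claim for it is unproved, and the exact cancellations that make the result identity-true rather than just order-correct are not supplied.
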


In what follows, let 
\begin{align*}
\varepsilon_0
=\frac{1-\ee^{-\lambda}}{\lambda}(\lambda-\Var S_n+\gamma')
=\frac{1-\ee^{-\lambda}}{\lambda}
\Bigl(\sum_{j=1}^n\overline{p}_{j,\bfcdot}^2+\gamma''\Bigr)
\end{align*}
be the upper bound in Theorem \ref{t937587}. 
\begin{remark} 
Let the assumptions of Theorem \ref{t2987658}
hold. Then 
\begin{gather}
\varepsilon_1
\leq 2
(\overline{p}_{\max,\bfcdot}+\overline{p}_{\bfcdot,\max}) 
\sum_{j=1}^n\overline{p}_{j,\bfcdot}^2\,,  \label{e2275483} \\
\varepsilon_2
\leq  4
(\overline{p}_{\max,\bfcdot}+\overline{p}_{\bfcdot,\max}) \gamma'',
\qquad
\Bigl(\frac{1-\ee^{-\lambda}}{\lambda}\Bigr)^2\varepsilon_3
\leq \frac{2n^2(n-1)}{(n-2)^2(n-3)}\varepsilon_0^2,\label{e2275484}
\end{gather}
where 
$\overline{p}_{\max,\bfcdot}
=\max_{j\in\set{n}}\overline{p}_{j,\bfcdot}$
and 
$\overline{p}_{\bfcdot,\max}
=\max_{r\in\set{n}}\overline{p}_{\bfcdot,r}$. 
These inequalities together with \eqref{e28649837} 
lead to the somewhat crude bound 
\begin{align*} 
\dtv{P^{S_n}}{Q_2}
&\leq  4\varepsilon_0
\Bigl(\min\Big\{1,\sqrt{\frac{2}{\lambda\ee}}\Big\}
(\overline{p}_{\max,\bfcdot}+\overline{p}_{\bfcdot,\max}) 
+\frac{n^2(n-1)}{2(n-2)^2(n-3)}
\varepsilon_0\Bigr),  
\end{align*}
which shows that the right-hand side of \eqref{e28649837} 
is of a better order than $\varepsilon_0$. 
The inequalities above are easily proved by using that, 
for $(j,k),(r,s)\in\set{n}_{\neq}^2$,  
\begin{gather*}
\lambda_{j,r}'-\lambda
=\frac{\lambda_{j,r}'}{n}
-\Bigl(\overline{p}_{\bfcdot,r}+\overline{p}_{j,\bfcdot}
-\frac{p_{j,r}}{n}\Bigr),  \\
\lambda_{j,k,r,s}''-\lambda
=\frac{2}{n}\lambda_{j,k,r,s}''
-\Bigl(\overline{p}_{j,\bfcdot}
+\overline{p}_{k,\bfcdot}
+\frac{1}{n}\sum_{u\in\set{n}\setminus\{j,k\}}p_{u,r}
+\frac{1}{n}\sum_{u\in\set{n}\setminus\{j,k\}}p_{u,s}\Bigr)
\end{gather*}
giving
\begin{align*}
\abs{\lambda-\lambda_{j,r}'}
&\leq \max\Big\{\frac{\lambda_{j,r}'}{n},\;
\overline{p}_{\bfcdot,r}+\overline{p}_{j,\bfcdot}
-\frac{p_{j,r}}{n}\Big\}
\leq \overline{p}_{\max,\bfcdot}+\overline{p}_{\bfcdot,\max}
\end{align*}
and
\begin{align*}
\abs{\lambda-\lambda_{j,k,r,s}''}
&\leq \max\Big\{
\frac{2}{n}\lambda_{j,k,r,s}'', \;
\overline{p}_{j,\bfcdot}
+\overline{p}_{k,\bfcdot}
+\frac{1}{n}\sum_{u\in\set{n}\setminus\{j,k\}}p_{u,r}
+\frac{1}{n}\sum_{u\in\set{n}\setminus\{j,k\}}p_{u,s}
\Big\}\\
&\leq2 (\overline{p}_{\max,\bfcdot}+\overline{p}_{\bfcdot,\max}).
\end{align*}
\end{remark}
In what follows, $C$ denotes a positive absolute constant, 
the value of which may change from line to line. Further, 
let $\floor{x}$ for  $x\in\RR$ be the largest integer $\leq x$. 

\begin{corollary}\label{c2975766}
Let the assumptions of Theorem \ref{t2987658} hold. 
Then 
\begin{gather}
\ABS{\dtv{P^{S_n}}{\Po(\lambda)}
-\frac{\lambda-\Var S_n}{\sqrt{2\pii \ee}\,\lambda}}
\leq   C\min\Bigl\{1,\frac{1}{\lambda}\Bigr\}
\Bigl(\frac{\lambda-\Var S_n}{\lambda}+\varepsilon\Bigr),
\label{e297575} \\
\dtv{P^{S_n}}{\Po(\lambda)}
\leq \min\Big\{1,\frac{3}{4\lambda\ee}\Big\}(\lambda-\Var S_n)
+\frac{1-\ee^{-\lambda}}{\lambda}\varepsilon.\label{e2118648}
\end{gather}
The constant $\frac{3}{4\ee}$ in \eqref{e2118648} 
is the best possible.  
\end{corollary}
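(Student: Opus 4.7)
The plan is to use $Q_2$ as an intermediate approximation and reduce both inequalities to estimates on $\alpha(\lambda):=\normtv{(\dirac_1-\dirac_0)^{*2}*\Po(\lambda)}$. Since $(\dirac_1-\dirac_0)^{*2}*\Po(\lambda)$ has total mass zero, $Q_2$ is a probability measure, and the reverse triangle inequality for $\dtvwa$ together with Theorem~\ref{t2987658} gives
\[
\bigl|\dtv{P^{S_n}}{\Po(\lambda)}-\dtv{Q_2}{\Po(\lambda)}\bigr|
\leq \dtv{P^{S_n}}{Q_2}\leq \tfrac{1-\ee^{-\lambda}}{\lambda}\,\varepsilon ,
\]
while \eqref{e22876346} and $\lambda-\Var S_n\geq 0$ (see \eqref{e82166}) give $\dtv{Q_2}{\Po(\lambda)}=\tfrac{\lambda-\Var S_n}{4}\alpha(\lambda)$. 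Everything thus reduces to controlling $\alpha(\lambda)$.

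For \eqref{e2118648} I would combine two bounds: the trivial estimate $\alpha(\lambda)\leq 4$, from $\normtv{(\dirac_1-\dirac_0)^{*2}}=4$ and submultiplicativity of $\normtv{\cdot}$ under convolution, and the sharp estimate $\alpha(\lambda)\leq \tfrac{3}{\ee\lambda}$. The latter is the main technical input: one identifies the (at most two) sign-change indices $k_a,k_b$ of $\Delta^2 f_\lambda$, uses partial summation to obtain the closed form $\alpha(\lambda)=2(\Delta f_\lambda(k_a)-\Delta f_\lambda(k_b))-\Delta f_\lambda(0)$, and then exploits the ratio identity $f_\lambda(k+1)/f_\lambda(k)=\lambda/(k+1)$ to show $\sup_{\lambda>0}\lambda\alpha(\lambda)=3/\ee$. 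These combine into $\alpha(\lambda)\leq 4\min\{1,\tfrac{3}{4\ee\lambda}\}$, and inserting this into the formula for $\dtv{Q_2}{\Po(\lambda)}$ together with the triangle inequality yields \eqref{e2118648}. To show the constant $\tfrac{3}{4\ee}$ is optimal, I would take $p_{j,r}\equiv p$, so that $P^{S_n}=\mathrm{Bin}(n,p)$, $\lambda=np$, $\lambda-\Var S_n=np^2$, and a direct computation gives $\varepsilon=O(np^3)=o(\lambda-\Var S_n)$ as $p\to 0$; letting $n\to\infty$ with $np$ tending to the maximizer of $\lambda\alpha(\lambda)$ then drives $\dtv{P^{S_n}}{\Po(\lambda)}/(\lambda-\Var S_n)\to\tfrac{3}{4\ee\lambda}$.

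For \eqref{e297575} I would refine the bound on $\alpha(\lambda)$ to the explicit asymptotic $\alpha(\lambda)=\tfrac{4}{\sqrt{2\pii\ee}\,\lambda}+R(\lambda)$ with $|R(\lambda)|\leq C\min\{1,\lambda^{-2}\}$. The $O(\lambda^{-2})$ error, which is better than one might naively expect, arises because (i) the discrete maximum/minimum of $\Delta f_\lambda$ sits within distance $1/\sqrt{\lambda}$ of the points $\lambda\mp\sqrt{\lambda}$, where the second derivative $\phi''$ of the standard normal density vanishes (so the first-order Taylor correction is zero), and (ii) the leading Edgeworth correction to $f_\lambda$ contributes an \emph{even} function of $y=(k-\lambda)/\sqrt{\lambda}$ to $\Delta f_\lambda$, so it cancels in the difference $\Delta f_\lambda(k_a)-\Delta f_\lambda(k_b)$. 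The leading constant is consistent with the continuous analogue $\int|\phi_\sigma''|=\tfrac{4}{\sigma^2\sqrt{2\pii\ee}}$ at $\sigma^2=\lambda$. Inserting this expansion into the triangle inequality and absorbing the trivial bounds $\dtvwa\leq 1$ and $\tfrac{\lambda-\Var S_n}{\sqrt{2\pii\ee}\,\lambda}\leq\tfrac{1}{\sqrt{2\pii\ee}}$ into the $\min\{1,1/\lambda\}$ factor for bounded $\lambda$ produces \eqref{e297575}.

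The main obstacles are the sharp identity $\sup_{\lambda>0}\lambda\alpha(\lambda)=3/\ee$ (with attainment) and the second-order Poisson asymptotic with $O(\lambda^{-2})$ error; both require delicate analysis of the discrete second difference of the Poisson density. Everything else is routine bookkeeping around the triangle inequality.
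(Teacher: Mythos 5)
Your decomposition via $Q_2$, the identity $\dtv{Q_2}{\Po(\lambda)}=\tfrac{1}{4}(\lambda-\Var S_n)\normtv{(\dirac_1-\dirac_0)^{*2}*\Po(\lambda)}$, the appeal to Theorem~\ref{t2987658}, and the two norm estimates $\normtv{(\dirac_1-\dirac_0)^{*2}*\Po(t)}\le\min\{4,3/(t\ee)\}$ and its asymptotic expansion around $4/(t\sqrt{2\pii\ee})$ are exactly the route the paper takes (Lemma~\ref{l3297576}), with the only difference that the paper cites those norm bounds from the literature (\cite[Lemma 3]{MR1841404}, \cite[Lemma 5]{MR1965122}) while you sketch deriving them from the sign-change structure of $\Delta^2\po(\cdot,\lambda)$ and an Edgeworth-type cancellation. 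One small correction: the remainder bound should read $\abs{R(\lambda)}\le \tfrac{C}{\lambda}\min\{1,1/\lambda\}$ rather than $C\min\{1,\lambda^{-2}\}$ (the latter fails for small $\lambda$ since $\alpha(\lambda)\le 4$ while $4/(\sqrt{2\pii\ee}\,\lambda)\to\infty$), but this does not affect your argument because you already fall back on trivial bounds for bounded $\lambda$; your optimality check with $p_{j,r}\equiv p$ likewise mirrors Remark~\ref{r286492}(\ref{r286492.b}).
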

\begin{remark}\label{r286492}%
Let the assumptions of Theorem \ref{t2987658} hold. 
\begin{enumerate}

\item
Let the matrix $(p_{j,r})$ have decreasing rows. 
Then $P^{S_n}$ is a Bernoulli convolution, see
Remark \ref{r2230585}. Let $\theta=\frac{\lambda-\Var S_n}{\lambda}$.
Since $\gamma'=0$, we have 
\begin{align}\label{e2876474}
\varepsilon_0=(1-\ee^{-\lambda})\theta
=\frac{1-\ee^{-\lambda}}{\lambda}
\Bigl(\sum_{j=1}^n\overline{p}_{j,\bfcdot}^2+\gamma''\Bigr)
\quad\text{and}\quad
\theta=\frac{1}{\lambda}
\Bigl(\sum_{j=1}^n\overline{p}_{j,\bfcdot}^2+\gamma''\Bigr). 
\end{align}
Further, \eqref{e2275483}  and \eqref{e2275484} imply that  
\begin{align}\label{e2836565}
\varepsilon_1
&\leq 4\sum_{j=1}^n\overline{p}_{j,\bfcdot}^2, \quad 
\varepsilon_2
\leq  8 \gamma'' ,\quad
\varepsilon_3
\leq C(\lambda\theta)^2,
\quad
\varepsilon
\leq C\lambda\theta\min\Bigl\{1,\frac{1}{\sqrt{\lambda}}
+\theta\Bigr\}
\end{align}
and \eqref{e297575} yields 
\begin{align*}
\ABS{\dtv{P^{S_n}}{\Po(\lambda)}-\frac{\theta}{\sqrt{2\pii \ee}}}
&\leq  C\theta\min\Big\{1,\frac{1}{\sqrt{\lambda}} +\theta\Big\}.
\end{align*}
The latter also follows from \citet[formula (32)]{MR1735783} and is a 
generalization, resp. refinement, of results in 
\citet[Theorem 2]{MR0056861} (see also \citet[page 2]{MR1163825})
and \citet[Theorem 1.2]{MR832029}. 

\item \label{r286492.b}
Inequality \eqref{e2118648} is a refinement of \eqref{e862146}.
Further, the optimality of the constant $\frac{3}{4\ee}$ on the 
right hand side of \eqref{e2118648} can be verified
by using the special example of $p_{j,r}=p$ for all $j,r\in\set{n}$. 
Here, $S_n$ has a binomial distribution with parameters $n$ and $p$.  
Further, we have $\lambda=np$ and $\Var S_n=np(1-p)$. 
In view of \eqref{e2118648} and the definition of $\varepsilon$, we see 
that 
\begin{align}
\dtv{P^{S_n}}{\Po(\lambda)}
&\leq \frac{3}{4\ee}p+Cp^2. \label{e229837}
\end{align}
From \cite[Theorem 2]{MR1841404}, 
it follows that, in the present situation, 
\begin{align*}
\dtv{P^{S_n}}{\Po(\lambda)}\sim \frac{3}{4\ee}p, 
\end{align*}
as $p\to 0$ and $np\to1$, where we use the triangular scheme as in 
Remark \ref{r235246346}. Here  $\sim$ means that the quotient of both 
sides tends to $1$. 
This shows the optimality of the constant 
$\frac{3}{4\ee}$ in \eqref{e229837} and \eqref{e2118648}. 

\end{enumerate}
\end{remark}

We now consider the setting in the general matching problem; see 
\cite[pages 82--83]{MR1163825}. 

\begin{example}  
\begin{enumerate}

\item\label{ex224976.a}%
Let $m\in\set{n}$, $a_1,\dots,a_m,b_1,\dots,b_m\in\set{n}\cup\{0\}$ with 
$n=\sum_{\ell=1}^ma_\ell=\sum_{\ell=1}^mb_\ell$. 
For $\ell\in\{0,\dots,m\}$, let $A_\ell=\sum_{j=1}^\ell a_j$ and  
$B_\ell=\sum_{j=1}^\ell b_j$, where $\sum_{j=1}^0a_j=0$ denotes the 
empty sum. 
Define $p_{j,r}=1$ for all $(j,r)\in\bigcup_{\ell =1}^{m}
((A_{\ell -1},A_\ell ]\cap\set{n})\times ((B_{\ell -1},B_\ell ]\cap\set{n})$ and 
$p_{j,r}=0$ otherwise. Then 
\begin{gather*}
\overline{p}_{j,\bfcdot}
=\frac{1}{n}\sum_{r=1}^np_{j,r}=\frac{b_\ell }{n}
\quad\text{ if } \ell \in\set{m}\text{ and }j\in (A_{\ell -1},A_\ell ]\cap\set{n},
\\
\overline{p}_{\bfcdot,r}
=\frac{1}{n}\sum_{j=1}^np_{j,r}=\frac{a_\ell }{n}
\quad\text{ if } \ell \in\set{m}\text{ and }r\in (B_{\ell -1},B_\ell ]\cap\set{n},\\
\lambda
=\frac{1}{n}\sum_{\ell =1}^ma_\ell b_\ell, \quad
\sum_{j=1}^n\overline{p}_{j,\bfcdot}^2
=\frac{1}{n^2}\sum_{\ell =1}^ma_\ell b_\ell ^2, \quad
\sum_{r=1}^n\overline{p}_{\bfcdot,r}^2
=\frac{1}{n^2}\sum_{\ell =1}^ma_\ell ^2b_\ell, \\
\Var S_n
=\lambda-\frac{1}{n-1}\Bigl(
\frac{1}{n}\sum_{\ell =1}^ma_\ell b_\ell (a_\ell +b_\ell )
-\lambda^2-\lambda\Bigr)
\end{gather*}
and
\begin{align*}
\gamma'
&=\frac{2}{n^2(n-1)}\sum_{\ell\in\set{m}_{\neq}^2}
a_{\ell(1)}b_{\ell(1)}a_{\ell(2)}b_{\ell(2)}.
\end{align*}
Therefore, \eqref{e862146} gives 
\begin{align}
\dtv{P^{S_n}}{\Po(\lambda)}
&=\frac{1-\ee^{-\lambda}}{\lambda}
\Bigl(\frac{1}{n-1}\Bigl(
\frac{1}{n}\sum_{\ell =1}^ma_\ell b_\ell (a_\ell +b_\ell )
-\lambda^2-\lambda\Bigr)
+\gamma'\Bigr)
\label{e2987359}
\end{align}
slightly improving \eqref{e17514745} in this case, which reads 
as follows: 
\begin{align*}
\dtv{P^{S_n}}{\Po(\lambda)}
&\leq 
(1-\ee^{-\lambda})
\Bigl(\frac{n-2}{\lambda n(n-1)}\Bigl(
\frac{1}{n}\sum_{\ell =1}^ma_\ell b_\ell (a_\ell +b_\ell )
-\lambda^2-\lambda\Bigr)+\frac{2\lambda}{n}\Bigr),
\end{align*}
see also \cite[page 83]{MR1163825}.

\item 
Let the assumptions in part (\ref{ex224976.a}) hold and  
$a_1=\dots=a_m=b_1=\dots=b_m=d$. Then
$\overline{p}_{j,\bfcdot}=\frac{d}{n}$ for $j\in\set{n}$, 
$\overline{p}_{\bfcdot,r}=\frac{d}{n}$ for $r\in\set{n}$, 
$md=n$, $\lambda=d$, and
$\Var S_n
=d-\frac{d(d-1)}{n-1}$. 
Further, \eqref{e2987359} reduces to 
\begin{align}
\dtv{P^{S_n}}{\Po(\lambda)}
&\leq 
(1-\ee^{-d})\Bigl(
\frac{3d-1}{n}- 
\frac{(d-1)(2d-1)}{n(n-1)}\Bigr), \label{e1865845}
\end{align}
which coincides with an inequality in \cite[page 82]{MR1163825} 
proved by different arguments. 

In the case $n\geq 4$, it follows from \eqref{e2118648},
\eqref{e2275483} and \eqref{e2275484} that
\begin{align}
\dtv{P^{S_n}}{\Po(\lambda)}
&\leq \frac{3}{4\ee}\frac{d-1}{n-1}
+ 4\varepsilon_0\Bigl(\frac{2^{3/2}\sqrt{d}}{n\sqrt{\ee}}
+\frac{n^2(n-1)}{2(n-2)^2(n-3)}\varepsilon_0\Bigr) \label{e22598698}\\
&\leq\frac{3}{4\ee}\frac{d-1}{n-1}
+C\Bigl(\frac{d}{n}\Bigr)^2,\nonumber
\end{align}
where $\varepsilon_0$ is the upper bound in \eqref{e1865845}. 
We note that \eqref{e22598698} is better than \eqref{e1865845}
if $\frac{d}{n}$ is small. 
In particular, if $d=1$, we obtain a bound of order 
$(\frac{d}{n})^2$. 

Further, \eqref{e297575}, \eqref{e2275483} and \eqref{e2275484} imply 
that 
\begin{align*}
\ABS{\dtv{P^{S_n}}{\Po(\lambda)}
-\frac{d-1}{\sqrt{2\pii \ee}(n-1)}}
&\leq C\Bigl(\frac{d-1}{d(n-1)}+\Big(\frac{d}{n}\Big)^2\Bigr).
\end{align*}
In particular, it follows that 
\begin{align*}
\dtv{P^{S_n}}{\Po(\lambda)}
\sim \frac{d}{\sqrt{2\pii\ee}\,n}\quad\text{ as } \quad
d\to\infty,\quad n\to\infty,\quad\text{and}\quad \frac{d}{n}\to0. 
\end{align*}
\end{enumerate}
\end{example}

\section{Results for Wasserstein and local distances} \label{s18647}
Let the one-point concentration of a $\Zpl$-valued random variable 
$Z$ be defined by
\begin{align}\label{e286259}
c(Z)=\sup_{m\in\Zpl}P(Z=m).
\end{align}
\begin{theorem} \label{t23597}
For $(j,k)\in\set{n}_{\neq}^2$, 
let $S_n^{(j)}=\sum_{i\in\set{n}\setminus\{j\}}X_{i,\pi(i)}$
and $S_n^{(j,k)}=\sum_{i\in\set{n}\setminus\{j,k\}}X_{i,\pi(i)}$.
Set $\eta_1=\max_{j\in \set{n}}c(S_n^{(j)})$ and 
$\eta_2=\max_{(j,k)\in \set{n}_{\neq}^2}c(S_n^{(j,k)})$. 
Then  
\begin{align}
\normw{P^{S_n}-\Po(\lambda)}
&\leq \min\Big\{1,\frac{4}{3}\sqrt{\frac{2}{\lambda\ee}}\Big\}
\Bigl(\sum_{j=1}^n\overline{p}_{j,\bfcdot}^2+\gamma''\Bigr),
\label{e286485} \\
\normloc{P^{S_n}-\Po(\lambda)}
&\leq 2\frac{1-\ee^{-\lambda}}{\lambda}
\Bigl(\eta_1\sum_{j=1}^n\overline{p}_{j,\bfcdot}^2+\eta_2\gamma''\Bigr). 
\label{e286486} 
\end{align}
\end{theorem}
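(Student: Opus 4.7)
Plan. The proof proceeds by the Stein--Chen method. For a test function $h$, let $g = g_h$ solve the Poisson Stein equation $\lambda g(m+1) - m g(m) = h(m) - \Po(\lambda)(h)$, so that $\EE h(S_n) - \Po(\lambda)(h) = \EE[\lambda g(S_n+1) - S_n g(S_n)]$. I invoke the Barbour--Eagleson bounds $\norminfty{\Delta g_h}\leq \min\{1,\frac{4}{3}\sqrt{2/(\lambda\ee)}\}$ for $h\in\calfw$ and $\norminfty{\Delta g_m}\leq (1-\ee^{-\lambda})/\lambda$ for $h = \bbone_{\{m\}}$, both available from \cite{MR1163825}.

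The core is a combinatorial decomposition of $\EE[\lambda g(S_n+1) - S_n g(S_n)]$ exploiting the permutation structure. Writing $S_n = X_{j,\pi(j)} + S_n^{(j)}$ and using $X_{j,\pi(j)}\in\{0,1\}$ gives $\EE[S_n g(S_n)] = \sum_j\EE[X_{j,\pi(j)} g(S_n^{(j)}+1)]$, and conditioning on $\pi(j)=r$ turns this into $\frac{1}{n}\sum_{j,r}p_{j,r}\EE[g(\tilde S_n^{(j,r)}+1)]$, where $\tilde S_n^{(j,r)}$ has the distribution of $S_n^{(j)}$ given $\pi(j)=r$. Subtracting $\lambda\EE[g(S_n+1)]=\frac{1}{n}\sum_{j,r}p_{j,r}\EE[g(S_n+1)]$ and splitting
\begin{align*}
\EE[g(S_n+1)] - \EE[g(\tilde S_n^{(j,r)}+1)]
&= \EE[X_{j,\pi(j)}\Delta g(S_n^{(j)}+1)]\\
&\quad + \bigl(\EE[g(S_n^{(j)}+1)] - \EE[g(\tilde S_n^{(j,r)}+1)]\bigr),
\end{align*}
the $r$-independent first summand contributes the $\sum_j\overline{p}_{j,\bfcdot}^2$ weight via $\frac{1}{n}\sum_r p_{j,r} = \overline{p}_{j,\bfcdot}$. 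The second summand equals $\frac{1}{n}\sum_{s\neq r}\bigl(\EE[g(\tilde S_n^{(j,s)}+1)]-\EE[g(\tilde S_n^{(j,r)}+1)]\bigr)$ and is handled by a swap coupling between $\pi$ conditioned on $\pi(j)=s$ and $\pi$ conditioned on $\pi(j)=r$: transposing $\pi$ at the single index $k$ for which $\pi(k)=r$ yields the telescoping $(p_{k,r}-p_{k,s})\EE[\Delta g(\tilde S_n^{(j,k,r,s)}+1)]$ after averaging over the uniform $k$, and symmetrizing in $(r,s)\in\set{n}_{\neq}^2$ converts $\sum p_{j,r}(p_{k,r}-p_{k,s})$ into $\tfrac{1}{2}\sum(p_{j,r}-p_{j,s})(p_{k,r}-p_{k,s})$, yielding the $\gamma''$ weight.

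For \eqref{e286485}, take $h\in\calfw$ and bound each $|\EE[\Delta g(\cdot)]|\leq \norminfty{\Delta g_h}$; together with $|(p_{j,r}-p_{j,s})(p_{k,r}-p_{k,s})|=|p_{j,r}-p_{j,s}|\,|p_{k,r}-p_{k,s}|$, this combines the two summands into the Wasserstein bound. For \eqref{e286486}, take $h = \bbone_{\{m\}}$ and refine the crude estimate $|\EE[\Delta g_m(Z+1)]|\leq \norminfty{\Delta g_m}$ by a concentration-type bound of the form $|\EE[\Delta g_m(Z+1)]|\leq 2\,c(Z)\,\norminfty{\Delta g_m}$, derived by Abel summation and the unique sign change of $g_m$ at $k=m$. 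Applying this with $Z=\tilde S_n^{(j,s)}$ for the first summand and $Z=\tilde S_n^{(j,k,r,s)}$ for the second, then controlling the conditional concentrations by the unconditional $\eta_1=\max_j c(S_n^{(j)})$ and $\eta_2=\max_{(j,k)}c(S_n^{(j,k)})$, produces \eqref{e286486}. The main obstacle is this local refinement: obtaining the concentration factor $c(Z)$ in place of the crude $1$ requires careful Abel summation leveraging the monotonicity structure of $g_m$, and reducing the conditional peaks to the unconditional $\eta_1,\eta_2$ needs exploiting the specific mixture form of $P^{S_n^{(j)}}$ and $P^{S_n^{(j,k)}}$.
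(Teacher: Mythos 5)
Your overall strategy is the same as the paper's: a Stein--Chen decomposition of $\EE[\lambda g(S_n+1)-S_n g(S_n)]$ into a "diagonal" term weighted by $\sum_j\overline{p}_{j,\bfcdot}^2$ and a two-index "covariance" term weighted by $\gamma''$, obtained by conditioning on $\pi$ and a transposition argument. The paper packages this in Proposition~\ref{l298757} (giving $D_1 = \sum_j\EE(\overline{p}_{j,\bfcdot}p_{j,\pi(j)}\Delta g(S_n^{(j)}+1))$ and the analogous $D_2$ involving $\Delta g(S_n^{(j,k)}+1)$) and Corollary~\ref{c8263895}, then derives Theorem~\ref{t23597} from these exactly as in the proof of Theorem~\ref{t937587}; your swap-coupling step corresponds to the use of the transposition $\tau_{j,k}$ in the proof of Proposition~\ref{l298757}, and your symmetrization in $(r,s)$ matches the averaging of \eqref{e8257665} and \eqref{e8257666}. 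For \eqref{e286485} this is complete once $\norminfty{\Delta g_{t,f}}\leq\min\{1,\tfrac{4}{3}\sqrt{2/(t\ee)}\}$ is invoked (Lemma~\ref{l237315}(\ref{l237315.b})).

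For \eqref{e286486} you have correctly identified the delicate point, but the reduction you describe goes the wrong way and is not filled in. In your formulation the concentration bound $\EE|\Delta g_{t,\{a\}}(Z+1)|\leq 2c(Z)\frac{1-\ee^{-t}}{t}$ (Lemma~\ref{l237315}(\ref{l237315.c}), resting on \eqref{e25705}) is applied to the \emph{conditional} variables $\tilde S_n^{(j,r)}$ and $\tilde S_n^{(j,k,r,s)}$, since the weights $p_{j,r}$, $p_{j,r}p_{k,s}$, etc.\ require conditioning on $\pi(j)=r$ (resp.\ $\pi(j)=r,\pi(k)=s$). The resulting quantities $c(\tilde S_n^{(j,r)})$ are not dominated by $\eta_1=\max_j c(S_n^{(j)})$: $P^{S_n^{(j)}}$ is the uniform mixture $\tfrac1n\sum_r P^{\tilde S_n^{(j,r)}}$, so $c(S_n^{(j)})\leq\max_r c(\tilde S_n^{(j,r)})$, not the reverse. ``Exploiting the mixture form'' therefore cannot yield the claimed reduction; in general a uniform mixture has a \emph{smaller} peak than its worst component. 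The paper sidesteps introducing $\tilde S_n^{(j,r)}$ at this stage: in $D_1$ the argument of $\Delta g$ is the \emph{unconditioned} $S_n^{(j)}$, and the factor $p_{j,\pi(j)}$ is left as a random weight rather than traded for a conditional distribution of the remaining sum. You should rewrite the estimate so that $c(S_n^{(j)})$ and $c(S_n^{(j,k)})$ enter directly from this unconditioned form, paying attention to the fact that $p_{j,\pi(j)}$ and $S_n^{(j)}$ are not independent; as written, your proposal has a genuine gap here, and merely invoking $\eta_1,\eta_2$ as defined does not close it.
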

\begin{remark}\label{r2287658} 
Let the assumptions of Theorem \ref{t23597} hold.  
\begin{enumerate}

\item The right-hand side of \eqref{e286485} is equal to 
\begin{align*}
\min\Big\{1,\frac{4}{3}\sqrt{\frac{2}{\lambda\ee}}\Big\}
(\lambda-\Var S_n+\gamma')
=\min\Big\{1,\frac{4}{3}\sqrt{\frac{2}{\lambda\ee}}\Big\}
\Bigl(\frac{1}{n}\sum_{(j,r)\in\set{n}^2}p_{j,r}^2-\gamma'''\Bigr),
\end{align*}
see Theorem \ref{t937587}. A similar statement holds with respect 
to \eqref{e286486} after 
estimating $\eta_1$ and $\eta_2$ by their maximum. 
In comparison to \eqref{e286485},  the bound in Theorem \ref{t937587} 
contains an additional factor $\lambda^{-1/2}$ as expected; for 
instance, see \cite[Theorem 2]{MR1841404} in the case of
Bernoulli convolutions. 

\item \label{r2287658.b}
For practical applications of \eqref{e286486}, it is 
necessary to use an explicit upper bound of $\eta_1$ and $\eta_2$, 
for which we refer the reader to \citet{Roos2023}. 
In many cases, the bounds are of the order $\lambda^{-1/2}$. 
This would lead to an upper bound of 
$\normloc{P^{S_n}-\Po(\lambda)}$ with an additional factor 
$\lambda^{-1/2}$ in comparison with the bound in Theorem \ref{t937587}. 
However, a detailed discussion is omitted due to lack of space. 

\item 
For $(j,k)\in\set{n}_{\neq}^2$, we have 
\begin{align*}  
c(S_n^{(j)})\leq 2c(S_n^{(j,k)}), 
\end{align*}
since
$P(S_n^{(j)}=m)\leq P(S_n^{(j,k)}\in\{m-1,m\})\leq 2c(S_n^{(j,k)})$
for $m\in\Zpl$.
This implies that $\eta_1\leq 2\eta_2$, which can be used to estimate 
the right-hand side of \eqref{e286486}.  

\end{enumerate}

\end{remark}
For a finite set $B$, let $\card{B}$ denote its cardinality. 
\begin{theorem}\label{t24745}
Under the assumptions of Theorem \ref{t2987658}, we have 
\begin{align}\
\normw{P^{S_n}-Q_2}
&\leq \min\Big\{1,\frac{4}{3}\sqrt{\frac{2}{\lambda\ee}}\Big\}
\varepsilon, \label{e1289483}\\
\normloc{P^{S_n}-Q_2}
&\leq 2\Bigl(\frac{1-\ee^{-\lambda}}{\lambda}\Bigr)^2
(\varepsilon_1+\varepsilon_2+\kappa\varepsilon_3), \label{e02837576}
\end{align}
where, for $(j,k),(r,s)\in\set{n}_{\neq}^2$,  
\begin{align*}
(T_{j,r}')^{B}
&:=\sum_{i\in\set{n}\setminus (\{j\}\cup B)}X_{i,\pi_{j,r}(i)}
\text{ for } B\subseteq\set{n}\setminus\{j\},\\
(T_{j,k,r,s}'')^{B}
&:=\sum_{i\in\set{n}\setminus(\{j,k\}\cup B)}
X_{i,\pi_{j,k,r,s}(i)}
\text{ for } B\subseteq\set{n}\setminus\{j,k\},  
\end{align*}
the random variable
$\pi_{j,r}$, resp. $\pi_{j,k,r,s}$, is independent of $X$ and has
uniform distribution on 
$(\set{n}\setminus\{r\})_{\neq}^{\set{n}\setminus\{j\}}$, 
resp. 
$(\set{n}\setminus\{r,s\})_{\neq}^{\set{n}\setminus\{j,k\}}$, 
and 
\begin{align*}
\kappa=\max\Bigl\{
\max_{B\subseteq \set{n}\setminus\{j\}:\,1\leq \card{B}\leq 2}
c((T_{j,r}')^{B}), 
\max_{B\subseteq \set{n}\setminus\{j,k\}:\,1\leq \card{B}\leq 2}
c((T_{j,k,r,s}'')^{B})\Bigr\}. 
\end{align*}
\end{theorem}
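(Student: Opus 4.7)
The strategy is to re-use the Stein-Chen framework that underlies Theorem~\ref{t2987658}, keeping the combinatorial factors $\varepsilon_1,\varepsilon_2,\varepsilon_3$ intact while replacing the Stein-solution norms that multiply them by the ones appropriate for the Wasserstein and local test-function classes. The proof of \eqref{e28649837} should proceed by introducing, for any bounded $g\colon\Zpl\to\RR$, the Stein solution $f$ of the Poisson equation
\begin{align*}
\lambda f(m+1) - m f(m) = g(m) - \Po(\lambda)g,
\end{align*}
and decomposing $\int g\,\dd(P^{S_n}-Q_2)$ --- via the Stein equation combined with a coupling argument on the random permutation $\pi$ and a second-order expansion exploiting the explicit form of $Q_2$ --- into three additive pieces whose moduli are bounded by $\varepsilon_1$, $\varepsilon_2$, and $\varepsilon_3$ times constants that depend only on uniform norms of $\Delta f$ (and, for the $\varepsilon_3$ piece, on further auxiliary bounds on $f$). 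Only those constants vary with the class of~$g$.

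For the Wasserstein bound \eqref{e1289483}, I would apply the duality $\normw{\bfcdot}=\sup_{g\in\calfw}\abs{\int g\,\dd(\bfcdot)}$. For $g\in\calfw$, the classical Stein-Chen estimate (see, e.g., \cite{MR1163825}) gives $\norminfty{\Delta f}\leq \min\{1,\tfrac{4}{3}\sqrt{2/(\lambda\ee)}\}$, and an elementary comparison shows that this same quantity is also an upper bound for $\frac{1-\ee^{-\lambda}}{\lambda}$. Substituting the sharper estimate in each of the three pieces of the decomposition yields \eqref{e1289483}.

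For the local bound \eqref{e02837576}, I would exploit the telescoping identity $\bbone_{\{m\}}=\bbone_{\{0,\dots,m\}}-\bbone_{\{0,\dots,m-1\}}$, so that the Stein solution $f$ for $g=\bbone_{\{m\}}$ is a difference of two half-line Stein solutions, each with $\norminfty{\Delta f}\leq \frac{1-\ee^{-\lambda}}{\lambda}$. Telescoping contributes an extra factor $\frac{1-\ee^{-\lambda}}{\lambda}$ on top of the one already present in \eqref{e28649837}, giving the prefactor $2\bigl(\frac{1-\ee^{-\lambda}}{\lambda}\bigr)^2$. The $\varepsilon_1$ and $\varepsilon_2$ pieces then carry through immediately. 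The $\varepsilon_3$ piece, however, is already second-order in the coupling construction, so that controlling it by yet another appeal to $\norminfty{\Delta f}$ would be wasteful; it naturally involves conditioning on one or two further coordinates beyond the $(j,r)$ or $(j,k,r,s)$ already fixed by the second-order expansion. The natural replacement is a one-point concentration estimate for the residual diagonal sum after these further coordinates are removed, which is precisely $(T_{j,r}')^B$ or $(T_{j,k,r,s}'')^B$ with $\card{B}\in\{1,2\}$, whose uniform concentration is $\kappa$.

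The main obstacle I expect is the bookkeeping in the $\varepsilon_3$ contribution: one must re-trace the coupling underlying Theorem~\ref{t2987658} and verify that every term in its second-order remainder can be identified as a difference of $f$ evaluated at partial sums of the form $(T')^B$ or $(T'')^B$ with $\card{B}\leq 2$. Once that identification is made, the uniform concentration $\kappa$ applies directly and the proof concludes.
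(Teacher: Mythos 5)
Your plan for the Wasserstein bound \eqref{e1289483} is essentially on target: the paper does exactly what you describe, namely take $h\in\calfw$, apply the Stein-solution estimate $\norminfty{\Delta g_{\lambda,f}}\leq\min\{1,\tfrac43\sqrt{2/(\lambda\ee)}\}$ from Lemma~\ref{l237315}(\ref{l237315.b}), and propagate it through the same $D_1,D_2$ decomposition used for \eqref{e28649837}. The side remark about $\frac{1-\ee^{-\lambda}}{\lambda}$ being bounded by the same quantity is not needed; you only have to notice that $\frac{1-\ee^{-\lambda}}{\lambda}$ in \eqref{e28649837} is exactly the role played by $\norminfty{\Delta g}$, so substituting the $\calfw$ estimate produces \eqref{e1289483} directly.

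The local bound \eqref{e02837576} is where your proposal has a genuine gap, and it is located exactly where you say the "main obstacle" is. The telescoping identity $\bbone_{\{a\}}=\bbone_{\{0,\ldots,a\}}-\bbone_{\{0,\ldots,a-1\}}$ cannot produce an extra factor $\frac{1-\ee^{-\lambda}}{\lambda}$: it only gives $\norminfty{\Delta g_{\lambda,\{a\}}}\leq 2\frac{1-\ee^{-\lambda}}{\lambda}$, which is in fact \emph{worse} than $\norminfty{\Delta g_{\lambda,\{a\}}}\leq\frac{1-\ee^{-\lambda}}{\lambda}$ that one already has from \eqref{e11635} since $\{a\}$ is a subset of $\Zpl$. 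The actual mechanism in the paper is a different decoupling. First, the paper uses the sign-change structure of $\Delta g_{t,\{a\}}$, namely the identity \eqref{e25705}, $\sum_{m}\abs{\Delta g_{t,\{a\}}(m)}=2\Delta g_{t,\{a\}}(a)\leq 2\norminfty{\Delta g_{t,\{a\}}}$, to perform a H\"older swap: instead of bounding $\abs{\sum_m(P(Z=m)-P(Y=m))\Delta g(m+1)}$ by $\norminfty{\Delta g}\cdot\normone{f_{P^Z-\Po(\lambda)}}$ (which gives a factor $\dtvwa$), one bounds it by $\normloc{P^Z-\Po(\lambda)}\cdot\sum_m\abs{\Delta g(m)}\leq 2\norminfty{\Delta g}\normloc{P^Z-\Po(\lambda)}$. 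Second, $\normloc{P^{T_{j,r}'}-\Po(\lambda)}$ and $\normloc{P^{T_{j,k,r,s}''}-\Po(\lambda)}$ are controlled by the local estimate \eqref{e3862658} of Corollary~\ref{c8263895}, which is where the one-point concentrations $c(W_j)$, $c(W_{j,k})$ of partial sums with one or two more summands removed appear; unwinding those across $T_{j,r}'$ and $T_{j,k,r,s}''$ is precisely what produces the quantity $\kappa$. In particular it is not correct that "the $\varepsilon_1$ and $\varepsilon_2$ pieces carry through immediately": their Stein-norm prefactor also changes, from $\min\{1,\sqrt{2/(\lambda\ee)}\}$ in the total-variation case to $\frac{1-\ee^{-\lambda}}{\lambda}$ in the local case, because the first summand in the right-hand side of \eqref{e3862658} has prefactor $2\frac{1-\ee^{-t}}{t}$ rather than $\min\{1,\sqrt{2/(t\ee)}\}$. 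Without \eqref{e25705} and Corollary~\ref{c8263895}(\ref{l237315.c}) (i.e.\ the inequality $\EE\abs{\Delta g_{t,\{a\}}(Z)}\leq\min\{1,2c(Z)\}\frac{1-\ee^{-t}}{t}$ feeding into \eqref{e3862658}) there is no route to \eqref{e02837576}, and the "conditioning on further coordinates" you gesture at has no concrete realization.
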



\begin{corollary}\label{c3297583}
Let the assumptions of Theorem \ref{t24745} hold. Then 
\begin{gather}
\normw{P^{S_n}-\Po(\lambda)}
\leq \min\Big\{1,\frac{1}{\sqrt{2\lambda \ee}}\Big\}(\lambda-\Var S_n) 
+\min\Bigl\{1,\frac{4}{3}\sqrt{\frac{2}{\lambda \ee}}\Bigr\}\varepsilon,
\label{e21965783}\\
\ABS{\normw{P^{S_n}-\Po(\lambda)}
-\frac{\lambda-\Var S_n}{\sqrt{2\pii\lambda}}}
\leq C\min\Bigl\{1,\frac{1}{\sqrt{\lambda}}\Bigr\}\Bigl(
\frac{\lambda-\Var S_n}{\sqrt{\lambda}}+\varepsilon \Bigr),
\label{e21965784}\\
\normloc{P^{S_n}-\Po(\lambda)}
\leq \min\Bigl\{1,\frac{1}{2}\Bigl(\frac{3}{2\lambda\ee}\Bigr)^{3/2}
\Bigr\}(\lambda-\Var S_n) 
+ 2  \Bigl(\frac{1-\ee^{-\lambda}}{\lambda}\Bigr)^2
(\varepsilon_1+\varepsilon_2
+\kappa\varepsilon_3), \label{e21965785} \\
\ABS{\normloc{P^{S_n}-\Po(\lambda)}
-\frac{\lambda-\Var S_n}{2\sqrt{2\pii}\lambda^{3/2}}}
\leq  C\min\Bigl\{1,\frac{1}{\lambda}\Bigr\}
\Bigl(\frac{\lambda-\Var S_n}{\lambda^{3/2}} 
+\min\Bigl\{1,\frac{1}{\lambda}\Bigr\}
(\varepsilon_1+\varepsilon_2+\kappa\varepsilon_3)\Bigr).
\label{e21965786}
\end{gather}
The constants $\frac{1}{\sqrt{2\ee}}$ and $(\frac{3}{2\ee})^{3/2}$ 
in \eqref{e21965783} and \eqref{e21965785} are the best possible.
\end{corollary}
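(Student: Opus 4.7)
The plan is to decompose
\begin{align*}
P^{S_n}-\Po(\lambda)=(P^{S_n}-Q_2)+(Q_2-\Po(\lambda)),
\end{align*}
apply the triangle inequality (or its reverse form when an asymptotic equivalence is claimed), control the first summand via Theorem \ref{t24745} using \eqref{e1289483} and \eqref{e02837576}, and rewrite the second summand with \eqref{e22876346} as
\begin{align*}
Q_2-\Po(\lambda)=-\tfrac{1}{2}(\lambda-\Var S_n)(\dirac_1-\dirac_0)^{*2}*\Po(\lambda).
\end{align*}
Thus the four inequalities reduce to sharp uniform and asymptotic estimates for $\normw{(\dirac_1-\dirac_0)^{*2}*\Po(\lambda)}$ and $\normloc{(\dirac_1-\dirac_0)^{*2}*\Po(\lambda)}$.

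For \eqref{e21965783} and \eqref{e21965784} I would invoke $\normw{(\dirac_1-\dirac_0)*Q}=\normtv{Q}$ from \eqref{e2985839}, which gives $\normw{(\dirac_1-\dirac_0)^{*2}*\Po(\lambda)}=\normtv{(\dirac_1-\dirac_0)*\Po(\lambda)}=\sum_{m\in\Zpl}|\pi_\lambda(m)-\pi_\lambda(m-1)|$, where $\pi_\lambda(m)=\ee^{-\lambda}\lambda^m/m!$. Unimodality of $\pi_\lambda$ with mode $\floor{\lambda}$ telescopes the sum to $2\pi_\lambda(\floor{\lambda})$, and a Stirling-type estimate yields the uniform bound $2\pi_\lambda(\floor{\lambda})\leq\min\{2,\sqrt{2/(\lambda\ee)}\}$ (with equality at $\lambda=1/2$) together with the asymptotic $2\pi_\lambda(\floor{\lambda})\sim\sqrt{2/(\pii\lambda)}$ as $\lambda\to\infty$. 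Substituting and using the reverse triangle inequality against \eqref{e1289483} (of smaller order in $\lambda$) gives \eqref{e21965783} and \eqref{e21965784}.

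For \eqref{e21965785} and \eqref{e21965786} I would compute
\begin{align*}
\normloc{(\dirac_1-\dirac_0)^{*2}*\Po(\lambda)}=\sup_{m\in\Zpl}|\pi_\lambda(m)-2\pi_\lambda(m-1)+\pi_\lambda(m-2)|
\end{align*}
directly. Using the recursion $m\pi_\lambda(m)=\lambda\pi_\lambda(m-1)$ to write the second difference as an explicit rational expression in $\pi_\lambda(m)$, $m$ and $\lambda$, one optimizes first in $m$ and then in $\lambda$ to obtain the uniform bound $\leq\min\{2,(3/(2\lambda\ee))^{3/2}\}$, while the local central limit approximation $\pi_\lambda(m)\approx(2\pii\lambda)^{-1/2}\exp(-(m-\lambda)^2/(2\lambda))$ yields the asymptotic $\sim(2\pii)^{-1/2}\lambda^{-3/2}$ (the magnitude of the second derivative of a Gaussian density at its peak). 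Together with \eqref{e02837576} and the reverse triangle inequality, these give the two local estimates.

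For the optimality of the constants $1/\sqrt{2\ee}$ and $(3/(2\ee))^{3/2}$, I would specialize to $p_{j,r}\equiv p$, so that $S_n\sim\mathrm{Bin}(n,p)$, $\lambda=np$, $\lambda-\Var S_n=\lambda p$, and $\varepsilon_1,\varepsilon_2,\varepsilon_3=O(p^2)$. Letting $p\to 0$ with $n\to\infty$ while tuning $\lambda$ to the value at which the supremum of the relevant Poisson quantity is attained, the error terms are of order $p^2$ and hence negligible relative to the $(\lambda-\Var S_n)$-contribution of order $p$, so the leading constants saturate the stated bounds. The main technical obstacle will be the sharp local bound $(3/(2\ee))^{3/2}$: unlike the Wasserstein case, unimodality does not directly yield a closed form for the supremum of the second difference, so the uniform bound requires a careful two-level optimization (discrete in $m$, continuous in $\lambda$) of an explicit but non-elementary function.
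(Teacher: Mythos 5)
Your decomposition $P^{S_n}-\Po(\lambda)=(P^{S_n}-Q_2)+(Q_2-\Po(\lambda))$, the use of Theorem \ref{t24745} for the first piece, and the reduction of the second piece to uniform and asymptotic estimates for $\normw{(\dirac_1-\dirac_0)^{*2}*\Po(\lambda)}$ and $\normloc{(\dirac_1-\dirac_0)^{*2}*\Po(\lambda)}$ is precisely the route the paper takes (it declares the proof analogous to that of Corollary~\ref{c2975766} and supplies those norm estimates via Lemma~\ref{l3297576}, citing \cite{MR1841404}, \cite{MR0996692}, \cite{MR1965122}, \cite{MR1735783}, \cite{MR1967760} rather than rederiving them). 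Your optimality argument via $p_{j,r}\equiv p$ also matches the paper's remark, so the proposal is correct and essentially identical in approach.
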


\begin{remark} 
Let the assumptions of Theorem \ref{t24745} hold. 
\begin{enumerate}

\item 
Let the matrix $(p_{j,r})$ have decreasing rows. 
Then $P^{S_n}$ is a Bernoulli convolution, see
Remark \ref{r2230585}. Let $\theta=\frac{\lambda-\Var S_n}{\lambda}$.
Then \eqref{e21965784}, \eqref{e2876474} and \eqref{e2836565} imply that 
\begin{align}
\ABS{\normw{P^{S_n}-\Po(\lambda)}
-\frac{\theta\sqrt{\lambda}}{\sqrt{2\pii}}}
&\leq C\theta\sqrt{\lambda}
\min\Bigl\{1,\frac{1}{\sqrt{\lambda}}+\theta\Bigr\}.  \label{e286392}
\end{align}
Further, in many cases, we have 
$\kappa\leq C\lambda^{-1/2}$, see \ref{r2287658}(\ref{r2287658.b}).
In this case, \eqref{e21965786}, \eqref{e2876474} and \eqref{e2836565} 
yield 
\begin{align}
\ABS{\normloc{P^{S_n}-\Po(\lambda)}
-\frac{\theta}{2\sqrt{2\pii\lambda}}}
&\leq  C\frac{\theta}{\sqrt{\lambda}}
\min\Bigl\{1,\frac{1}{\sqrt{\lambda}}+\theta\Bigr\}.  \label{e286393}
\end{align}
For \eqref{e286392} and \eqref{e286393} in the present situation, 
see also \citet[formula (32)]{MR1735783}. 

\item 
The optimality of the constants $\frac{1}{\sqrt{2\ee}}$ and 
$(\frac{3}{2\ee})^{3/2}$ in \eqref{e21965783} and \eqref{e21965785}   
can be verified by using the special example of $p_{j,r}=p$ for all 
$j,r\in\set{n}$. This follows from arguments similar to those given in 
Remark \ref{r286492}(\ref{r286492.b}). 
\end{enumerate}

\end{remark}

\section{Proofs of main results} \label{s2117654}
Let $t\in(0,\infty)$ and  $Y$ be a $\Po(t)$ distributed random variable. 
In what follows, we use the classical Stein-Chen approach without 
coupling; cf.\ \cite{MR0428387} or \cite{MR1163825}. 
This is based on the following idea: 
If $f\in \RR^{\Zpl}$, then a function $g:=g_{t,f}\in\RR^{\Zpl}$ 
exists such that $g(0)=0$ and the so-called Stein equation 
\begin{align}\label{e1864576}
f(m)=t g(m+1)-m g(m) \quad\text{for all } m\in\Zpl. 
\end{align}
holds. It turns out that $g$ is unique on $\NN$ and satisfies
\begin{align*}
g(m+1)=\frac{1}{t\po(m,t)}\sum_{j=0}^m\po(j,t)f(j)
\quad\text{ for all }m\in\Zpl,
\end{align*}
where $\po(m,t)=\ee^{-t}\frac{t^m}{m!}$ for $m\in\Zpl$.
If additionally $\EE \abs{f(Y)}<\infty$ and $\EE f(N)=0$, then 
we also have 
\begin{align}\label{e234567}
g(m+1)=-\frac{1}{t\po(m,t)}\sum_{j=m+1}^\infty\po(j,t)f(j)
\quad\text{ for all }m\in\Zpl. 
\end{align}
Now suppose that $W$ is a $\Zpl$-valued random variable, 
the distribution of which has to be approximated by $\Po(t)$. 
Suppose further, that we want to measure the approximation error in 
terms of differences like $\abs{\EE h(W)-\EE h(Y)}$ 
for certain functions $h\in\RR^{\Zpl}$, where 
we assume that the expectations are finite. 
Letting $f=h-\EE h(Y)$ and $g:=g_{t,f}$, we then obtain from the Stein 
equation that 
\begin{align}\label{e2286584}
\abs{\EE h(W)-\EE h(Y)}
=\abs{\EE f(W)}
=\abs{\EE(t g(W+1)-W g(W))},
\end{align}
where the right-hand side has to be further estimated. To achieve this, 
estimates for $g$ are necessary, some of which are given in the 
following lemma. 


\begin{lemma}\label{l237315}
Let $t\in(0,\infty)$ and  $Y$ be a $\Po(t)$ distributed random variable.
\begin{enumerate}

\item \label{l237315.a}%
Let  $A\subseteq \Zpl$, $h=\bbone_A\in\RR^{\Zpl}$, 
$f=h-\Po(t)(A)\in\RR^{\Zpl}$ and $g_{t,A}:=g_{t,f}$. Then
\begin{align}\label{e11635}
\norminfty{g_{t,A}}
\leq \min\Big\{1,\sqrt{\frac{2}{t\ee}}\Big\}, 
\quad
\norminfty{\Delta g_{t,A}}\leq \frac{1-\ee^{-t}}{t}
\leq \min\Big\{1,\frac{1}{t}\Big\}. 
\end{align}

\item \label{l237315.b}%
Let  $h\in \calfw$ and $f=h-\EE h(Y)\in\RR^{\Zpl}$. Then 
\begin{align}\label{e116361}
\norminfty{g_{t,f}} \leq 1, \quad
\norminfty{\Delta g_{t,f}}
\leq \min\Big\{1,\frac{4}{3}\sqrt{\frac{2}{t\ee}}\Big\}. 
\end{align}

\item \label{l237315.c}
Let $a\in\Zpl$, $h=\bbone_{\{a\}}\in\RR^{\Zpl}$, 
$f=h-\po(a,t)\in\RR^{\Zpl}$ and
$g_{t,\{a\}}$ be defined as in (\ref{l237315.a}). Let 
$Z$ be an arbitrary $\Zpl$-valued random variable and
$c(Z)$ be defined as in \eqref{e286259}.
Then 
\begin{align*}
\norminfty{g_{t,\{a\}}}
\leq 2 \frac{1-\ee^{-t}}{t},\quad
\EE\abs{\Delta g_{t,\{a\}}(Z)}
\leq \min\{1,2 c(Z)\} \frac{1-\ee^{-t}}{t}. 
\end{align*}

\end{enumerate}

\end{lemma}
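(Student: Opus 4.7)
The plan is to reduce every bound to the explicit representation of the Stein solution. Starting from \eqref{e1864576} with $g(0)=0$, and using \eqref{e234567} when $\EE f(Y)=0$, one has
\begin{align*}
g_{t,f}(m+1) = \frac{1}{t\po(m,t)}\sum_{j=0}^m \po(j,t)f(j) = -\frac{1}{t\po(m,t)}\sum_{j=m+1}^\infty \po(j,t)f(j),
\end{align*}
and for $f=\bbone_A-\Po(t)(A)$ this specializes to
\begin{align*}
g_{t,A}(m+1) = \frac{\Po(t)(A\cap\{0,\dots,m\})\,\Po(t)(\{m+1,m+2,\dots\}) - \Po(t)(A\cap\{m+1,m+2,\dots\})\,\Po(t)(\{0,\dots,m\})}{t\po(m,t)}.
\end{align*}
For part (\ref{l237315.a}), I would bound each term in the numerator by its worst-case value (achieved for $A$ a prefix interval) and invoke the classical uniform estimates on the ratio $\Po(t)(\{0,\dots,m\})\Po(t)(\{m+1,m+2,\dots\})/(t\po(m,t))$ of Barbour and Eagleson, as collected in \cite{MR1163825}; these immediately yield both inequalities in \eqref{e11635}.

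For part (\ref{l237315.b}), I would apply summation by parts. Writing $h(j)-h(0)=\sum_{k=0}^{j-1}\Delta h(k)$, subtracting the Poisson mean, and interchanging sums via Fubini expresses $g_{t,f}$ as a weighted combination, with weights $\Delta h(k)\in[-1,1]$, of solutions of the form $g_{t,\{k+1,k+2,\dots\}}$. The bound $\norminfty{g_{t,f}}\leq 1$ is then the classical Barbour-Eagleson estimate, while the refinement $\norminfty{\Delta g_{t,f}}\leq\min\{1,\frac{4}{3}\sqrt{2/(t\ee)}\}$ is due to Xia and subsequent authors and can be invoked directly.

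For part (\ref{l237315.c}), exploit linearity via the decomposition $\bbone_{\{a\}}=\bbone_{\{0,\dots,a\}}-\bbone_{\{0,\dots,a-1\}}$ (with the second term taken to be zero when $a=0$), so that $g_{t,\{a\}}=g_{t,\{0,\dots,a\}}-g_{t,\{0,\dots,a-1\}}$. For a prefix interval the numerator displayed above is a single product of Poisson tails, giving $\norminfty{g_{t,\{0,\dots,k\}}}\leq(1-\ee^{-t})/t$; the triangle inequality then yields $\norminfty{g_{t,\{a\}}}\leq 2(1-\ee^{-t})/t$. The uniform half of the expectation bound is immediate from $\norminfty{\Delta g_{t,\{a\}}}\leq(1-\ee^{-t})/t$ by part (\ref{l237315.a}). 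For the $2c(Z)$ half, I would use $\EE|\Delta g_{t,\{a\}}(Z)|\leq c(Z)\sum_{m=0}^\infty|\Delta g_{t,\{a\}}(m)|$, reducing the task to establishing the total variation bound $\sum_{m=0}^\infty|\Delta g_{t,\{a\}}(m)|\leq 2(1-\ee^{-t})/t$.

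This total variation bound is the main technical obstacle. I would extract the unimodal structure of $g_{t,\{a\}}$ from the explicit form: a direct check shows that $\Po(t)(\{0,\dots,m\})/\po(m,t)$ is strictly increasing in $m$ and $\Po(t)(\{m+1,m+2,\dots\})/\po(m,t)$ is strictly decreasing in $m$. Hence $g_{t,\{a\}}$ is strictly decreasing from $0$ into the negatives on $\{0,\dots,a\}$, jumps up to its positive maximum at $m=a+1$, and is strictly decreasing back to $0$ on $\{a+1,a+2,\dots\}$. Consequently $\sum_m|\Delta g_{t,\{a\}}(m)|=2(|g_{t,\{a\}}(a)|+g_{t,\{a\}}(a+1))$, and substituting the explicit values $|g_{t,\{a\}}(a)|=\Po(t)(\{0,\dots,a-1\})/a$ and $g_{t,\{a\}}(a+1)=\Po(t)(\{a+1,a+2,\dots\})/t$ reduces the required inequality to $\sum_{j=1}^a(a-j)\po(j,t)\geq 0$, which holds trivially.
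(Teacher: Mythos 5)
Your treatment of part (\ref{l237315.c}), in particular the monotonicity analysis of $g_{t,\{a\}}$ and the reduction of the total-variation identity \eqref{e25705} to $\sum_{j=1}^a(a-j)\po(j,t)\geq 0$, is correct and is considerably more self-contained than the paper's proof, which disposes of the entire lemma by citing Barbour--Eagleson, page 79 of Barbour--Hall, Lemma 9.1.5 and pages 7, 14, 15 of Barbour--Holst--Janson. Parts (\ref{l237315.a}) and (\ref{l237315.b}) in your write-up are, like the paper's, essentially citations, so the comparison there is a wash.

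There is, however, a genuine error in your argument for $\norminfty{g_{t,\{a\}}}\leq 2(1-\ee^{-t})/t$. The intermediate claim $\norminfty{g_{t,\{0,\dots,k\}}}\leq(1-\ee^{-t})/t$ is false: for a prefix interval and $m\geq k$ one has
\begin{align*}
g_{t,\{0,\dots,k\}}(m+1)=\frac{\Po(t)(\{0,\dots,k\})\,\Po(t)(\{m+1,m+2,\dots\})}{t\,\po(m,t)},
\end{align*}
and taking $k=m=\floor{t}$ with $t$ large gives a quantity of order $\frac{(1/2)(1/2)}{t\cdot(2\pii t)^{-1/2}} \asymp t^{-1/2}$, which is much larger than $(1-\ee^{-t})/t\asymp t^{-1}$. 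So prefix intervals only satisfy the generic bound of order $t^{-1/2}$ from \eqref{e11635}, and the triangle inequality does not give you the $t^{-1}$ rate. The fix does not require a new idea: your own unimodality analysis shows that $\norminfty{g_{t,\{a\}}}=\max\bigl\{|g_{t,\{a\}}(a)|,\,g_{t,\{a\}}(a+1)\bigr\}$, and the very same computation you carry out for the variation identity,
\begin{align*}
|g_{t,\{a\}}(a)|=\frac{1}{a}\sum_{j=0}^{a-1}\po(j,t)=\frac{1}{at}\sum_{j=1}^{a}j\,\po(j,t)\leq\frac{1}{t}\sum_{j=1}^{a}\po(j,t)\leq\frac{1-\ee^{-t}}{t},
\qquad
g_{t,\{a\}}(a+1)=\frac{\Po(t)(\{a+1,a+2,\dots\})}{t}\leq\frac{1-\ee^{-t}}{t},
\end{align*}
already yields $\norminfty{g_{t,\{a\}}}\leq(1-\ee^{-t})/t$, which is stronger than the stated bound without any factor $2$. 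Replace the flawed prefix-interval detour by this two-line verification and part (\ref{l237315.c}) is fully proved.
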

\begin{proof}
For the proof of the second inequality in (\ref{l237315.a}), 
see \citet[Lemma 4(ii)]{MR0706618}. The remaining inequalities in 
(\ref{l237315.a}) and (\ref{l237315.b}) can be found in 
\citet[pages 7, 14, 15]{MR1163825}. 
The inequalities in (\ref{l237315.c}) follow from 
Lemma 9.1.5 in \cite[page 176]{MR1163825}. Here it is used that 
\begin{align} \label{e25705}
\sum_{m=0}^\infty\abs{\Delta g_{t,\{a\}}(m)}
=2\Delta g_{t,\{a\}}(a),
\end{align}
see the proof of Lemma 4(ii) in \cite{MR0706618} and also 
\citet[page 79]{MR1003970}. 
\end{proof}
The following proposition will be useful in the proofs of 
Corollary \ref{c8263895} and Theorems \ref{t2987658} and \ref{t24745}.

\begin{proposition}\label{l298757}
Let $F,G$ be two finite sets with cardinality 
$m=\card{F}=\card{G}\geq2$, $(p_{j,r})\in[0,1]^{F\times G}$, 
$X_{j,r}$ be a $\Be(p_{j,r})$ distributed random variable
for $(j,r)\in F \times G$, $\rho$ be a random variable uniformly 
distributed over $G^F_{\neq}$.
We assume that all the random variables $\rho$, 
$X_{j,r}$, $(j\in F,r\in G)$  are independent. Let 
$W=\sum_{j\in F}X_{j,\rho(j)}$, 
$\overline{p}_{j,\bfcdot}'=\EE X_{j,\rho(j)}
=\frac{1}{m}\sum_{r\in G}p_{j,r}$, $(j\in F)$,
$\mu=\EE W=\sum_{j\in F}\overline{p}_{j,\bfcdot}'$. 
For $j,k\in F$ with $j\neq k$, 
let $W_j=W-X_{j,\rho(j)}$ and 
$W_{j,k}=W-X_{j,\rho(j)}-X_{k,\rho(k)}$.
For an arbitrary function $h\in\RR^{\Zpl}$, we have 
\begin{align}\label{e82654665}
\EE(\mu h(W+1)-Wh(W))=D_1+D_2,
\end{align}
where 
\begin{align*}
D_1&=\sum_{j\in F}\EE(\overline{p}_{j,\bfcdot}'p_{j,\rho(j)}
\Delta h(W_j+1)), 
\\
D_2&=\frac{1}{2m}\sum_{(j,k)\in F_{\neq}^2}
\EE((p_{j,\rho(j)}-p_{j,\rho(k)})(p_{k,\rho(j)}-p_{k,\rho(k)})
\Delta h(W_{j,k}+1)). 
\end{align*}
\end{proposition}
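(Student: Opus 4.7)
The plan is to expand $\EE[\mu h(W+1) - Wh(W)]$ summand by summand over $j \in F$, extracting $D_1$ via the Bernoulli indicator identity and identifying the residual with $D_2$ via a swap symmetrization. Since $X_{j,\rho(j)} \in \{0,1\}$, the pointwise identity $X_{j,\rho(j)}h(W) = X_{j,\rho(j)}h(W_j+1)$ holds, so $Wh(W) = \sum_{j \in F} X_{j,\rho(j)} h(W_j+1)$. Conditioning on $\rho$ (under which $X_{j,\rho(j)}$ is Bernoulli$(p_{j,\rho(j)})$ and independent of $W_j$) gives $\EE[X_{j,\rho(j)} h(W_j+1)] = \EE[p_{j,\rho(j)} h(W_j+1)]$. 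Applying the same indicator identity inside $h(W+1) = h(W_j + X_{j,\rho(j)}+1) = h(W_j+1) + X_{j,\rho(j)}\Delta h(W_j+1)$ yields $\EE[h(W+1)] = \EE[h(W_j+1)] + \EE[p_{j,\rho(j)}\Delta h(W_j+1)]$. Multiplying by $\overline{p}_{j,\bfcdot}'$ and subtracting, the $j$-th summand of $\EE[\mu h(W+1) - Wh(W)]$ equals $\EE[\overline{p}_{j,\bfcdot}' p_{j,\rho(j)}\Delta h(W_j+1)] + A_j$, where $A_j := \overline{p}_{j,\bfcdot}' \EE[h(W_j+1)] - \EE[p_{j,\rho(j)}h(W_j+1)]$; summing over $j$ yields $D_1 + \sum_j A_j$, so the task reduces to showing $\sum_j A_j = D_2$.

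To treat $A_j$, introduce $\psi_j(r) := \EE[h(W_j+1)\mid \rho(j)=r]$; since $\rho(j)$ is uniform on $G$, a symmetrization in $(r,s)$ gives
\begin{align*}
A_j = \frac{1}{2m^2}\sum_{(r,s)\in G^2_{\neq}}(p_{j,r}-p_{j,s})\bigl(\psi_j(s)-\psi_j(r)\bigr).
\end{align*}
The key step is a swap identity for $\psi_j(s)-\psi_j(r)$. Fix $r \neq s$: conditional on $\rho(j)=r$ the index $K := \rho^{-1}(s) \in F\setminus\{j\}$ is uniformly distributed, and the decomposition $W_j = X_{K,s} + W_{j,K}$ together with a second application of the Bernoulli identity to $X_{K,s}$ gives, after summing over $K=k$,
\begin{align*}
\psi_j(r) = \frac{1}{m-1}\sum_{k \neq j}\bigl(\beta_{j,k}(\{r,s\}) + p_{k,s}\,\alpha_{j,k}(\{r,s\})\bigr),
\end{align*}
where $\beta_{j,k}(\{r,s\}) := \EE[h(W_{j,k}+1)\mid \rho(j)=r,\rho(k)=s]$ and $\alpha_{j,k}(\{r,s\}) := \EE[\Delta h(W_{j,k}+1)\mid \rho(j)=r,\rho(k)=s]$. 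The crucial observation is that both depend only on the unordered pair $\{r,s\}$, because under either of the conditionings $\{\rho(j)=r,\rho(k)=s\}$ or $\{\rho(j)=s,\rho(k)=r\}$ the remaining $(\rho(i))_{i\in F\setminus\{j,k\}}$ is uniform on bijections $F\setminus\{j,k\}\to G\setminus\{r,s\}$. The analogous expansion of $\psi_j(s)$ (with $K := \rho^{-1}(r)$) has the same $\beta$-term and $p_{k,r}$ in place of $p_{k,s}$, so subtraction yields $\psi_j(s) - \psi_j(r) = \frac{1}{m-1}\sum_{k \neq j}(p_{k,r}-p_{k,s})\,\alpha_{j,k}(\{r,s\})$.

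Substituting back and summing over $j$ rearranges $\sum_j A_j$ into $\frac{1}{2m^2(m-1)}\sum_{(j,k)\in F^2_{\neq}}\sum_{(r,s)\in G^2_{\neq}}(p_{j,r}-p_{j,s})(p_{k,r}-p_{k,s})\,\alpha_{j,k}(\{r,s\})$, while a direct conditioning on $(\rho(j),\rho(k))$ in $D_2$ — which is uniform on $G^2_{\neq}$ with mass $\frac{1}{m(m-1)}$ per ordered pair and leaves $\Delta h(W_{j,k}+1)$ with conditional mean $\alpha_{j,k}(\{r,s\})$ — produces the identical double sum, giving $\sum_j A_j = D_2$. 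The main obstacle is the swap identity for $\psi_j(s)-\psi_j(r)$, which hinges on the symmetry of the conditional law of $(\rho(i))_{i \in F\setminus\{j,k\}}$ under transposition of $(\rho(j),\rho(k))$; the remainder is routine expansion and bookkeeping of indices.
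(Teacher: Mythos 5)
Your proof is correct and follows essentially the same route as the paper's: you reduce to $D_1$ plus a residual, expand each $\EE h(W_j+1)$ by a second Bernoulli/Fubini step to expose $\Delta h(W_{j,k}+1)$, and invoke the transposition symmetry of the conditional law of $(\rho(i))_{i\in F\setminus\{j,k\}}$ under swapping $(\rho(j),\rho(k))$ to symmetrize into $D_2$. The paper phrases the same symmetry as invariance of $\EE$ under the substitution $\rho \mapsto \rho\circ\tau_{j,k}$ and symmetrizes after summing over $(j,k)$, while you symmetrize over $(r,s)$ via explicit conditional expectations $\psi_j,\alpha_{j,k},\beta_{j,k}$; these are cosmetic differences in presentation, not in substance.
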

\begin{proof}
Using Fubini's theorem, we get, for $j\in F$,  
\begin{align*}
\EE(X_{j,\rho(j)}h(W))
&=\sum_{\ell\in G^F_{\neq}}
\EE(\bbone_{\{\rho=\ell\}}X_{j,\ell(j)}h(W_j+X_{j,\ell(j)}))\\
&=\EE(p_{j,\rho(j)}h(W_j+1))
\end{align*}
and similarly 
\begin{align*}
\EE h(W+1)
&=\EE(p_{j,\rho(j)}h(W_j+2)+(1-p_{j,\rho(j)})h(W_j+1)).
\end{align*}
Hence 
\begin{align*}
\lefteqn{\EE(\mu h(W+1)-Wh(W))}\\
&=\sum_{j\in F}\EE(\overline{p}_{j,\bfcdot}'h(W+1)-X_{j,\rho(j)}h(W))\\
&=\sum_{j\in F} 
\EE(\overline{p}_{j,\bfcdot}'p_{j,\rho(j)}h(W_j+2)
+\overline{p}_{j,\bfcdot}'(1-p_{j,\rho(j)})h(W_j+1)
-p_{j,\rho(j)}h(W_j+1))\\
&=D_1+D_2',
\end{align*}
where 
\begin{align*}
D_2'&=\sum_{j\in F} 
\EE((\overline{p}_{j,\bfcdot}'-p_{j,\rho(j)})h(W_j+1)).
\end{align*}
We have to show that $D_2'=D_2$. 
Similarly to the above, we obtain for $(j,k)\in F_{\neq}^2$ that
\begin{align*}
\EE((p_{j,\rho(k)}-p_{j,\rho(j)})h(W_j+1))
&=\EE((p_{j,\rho(k)}-p_{j,\rho(j)})
(p_{k,\rho(k)} h(W_{j,k}+2)\\
&\quad{}+(1-p_{k,\rho(k)})h(W_{j,k}+1))).
\end{align*}
Therefore  
\begin{align}
D_2'
&=\sum_{j\in F} 
\EE((\overline{p}_{j,\bfcdot}'-p_{j,\rho(j)})h(W_j+1))\nonumber\\
&=\frac{1}{m}\sum_{j\in F}\sum_{k\in F}
\EE((p_{j,\rho(k)}-p_{j,\rho(j)})h(W_j+1))\nonumber\\
&=\frac{1}{m}\sum_{(j,k)\in F_{\neq}^2}
\EE((p_{j,\rho(k)}-p_{j,\rho(j)})(p_{k,\rho(k)} h(W_{j,k}+2)
+(1-p_{k,\rho(k)}) h(W_{j,k}+1)))\nonumber\\
&=\frac{1}{m}\sum_{(j,k)\in F_{\neq}^2}
\EE((p_{j,\rho(k)}-p_{j,\rho(j)})p_{k,\rho(k)}
\Delta h(W_{j,k}+1)).  \label{e8257665}
\end{align}
The latter equality follows from the observation that,
for $(j,k)\in F_{\neq}^2$, we have 
\begin{align*}
\EE(p_{j,\rho(k)}h(W_{j,k}+1))=\EE(p_{j,\rho(j)}h(W_{j,k}+1)).
\end{align*}
In fact, the left-hand side does not change if we replace $\rho$ 
with the composition $\rho\circ \tau_{j,k}$, where 
$\tau_{j,k}\in F_{\neq}^F$ is the transposition, which interchanges $j$ 
and $k$. Similarly, we obtain 
\begin{align*}
\EE((p_{j,\rho(k)}-p_{j,\rho(j)})p_{k,\rho(k)}\Delta h(W_{j,k}+1))
&=\EE((p_{j,\rho(j)}-p_{j,\rho(k)})p_{k,\rho(j)}\Delta h(W_{j,k}+1)).
\end{align*}
Hence  
\begin{align}
D_2'
&=-\frac{1}{m}\sum_{(j,k)\in F_{\neq}^2}
\EE((p_{j,\rho(k)}-p_{j,\rho(j)})p_{k,\rho(j)}
\Delta h(W_{j,k}+1))\label{e8257666}.
\end{align}
By adding the right-hand sides in \eqref{e8257665} and \eqref{e8257666} 
and dividing by two we obtain $D_2'=D_2$. 
This proves the assertion. 
\end{proof}

\begin{remark}  
The combination of (2.10) and (2.11) in \citet{MR385977}  
leads to an identity, which is similar but not identical to 
\eqref{e82654665}. It may be possible to get our results by 
using that identity. However, we prefer \eqref{e82654665},
since it does not require additional notation like that in 
\cite[(2.3)--(2.9)]{MR385977}. 
\end{remark}

\begin{corollary} \label{c8263895}
Let the assumptions of Proposition \ref{l298757} hold.
For a $\Zpl$-valued random variable $Z$, let 
$c(Z)$ be defined as in \eqref{e286259}. Set 
\begin{align*}
\alpha=\sum_{j\in F}(\overline{p}_{j,\bfcdot}')^2,\quad
\beta=\frac{1}{2m^2(m-1)}
\sum_{(j,k)\in F_{\neq}^2}\sum_{(r,s)\in G_{\neq}^2}
\abs{p_{j,r}-p_{j,s}} \abs{p_{k,r}-p_{k,s}}.
\end{align*}
Then, for $t\in(0,\infty)$,
\begin{align}
\dtv{P^{W}}{\Po(t)}
&\leq \abs{t-\mu}
\min\Big\{1,\sqrt{\frac{2}{t\ee}}\Big\}
+\frac{1-\ee^{-t}}{t}(\alpha+\beta), \nonumber  \\
\normw{P^{W}-\Po(t)}
&\leq \abs{t-\mu}
+\min\Big\{1,\frac{4}{3}\sqrt{\frac{2}{t\ee}}\Big\}(\alpha+\beta), 
\nonumber \\
\normloc{P^{W}-\Po(t)}
&\leq 2\frac{1-\ee^{-t}}{t}
\Bigl(\abs{t-\mu}+\Bigl(\max_{j\in F}c(W_j)\Bigr)\alpha
+\Bigl(\max_{(j,k)\in F_{\neq}^2}c(W_{j,k})\Bigr)\beta\Bigr). 
\label{e3862658}
\end{align}

\end{corollary}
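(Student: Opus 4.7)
All three bounds follow the Stein--Chen template of Section \ref{s2117654}: for a test function $h\in\RR^{\Zpl}$ adapted to the metric, set $f=h-\EE h(Y)$ and let $g=g_{t,f}$ solve the Stein equation \eqref{e1864576}. Then, by \eqref{e2286584} and the trivial decomposition
\begin{align*}
\EE(tg(W+1)-Wg(W))=(t-\mu)\EE g(W+1)+\EE(\mu g(W+1)-Wg(W)),
\end{align*}
Proposition \ref{l298757} applied with $h=g$ rewrites the second summand as $D_1+D_2$, so that $\abs{\EE h(W)-\EE h(Y)}\le\abs{t-\mu}\,\norminfty{g}+\abs{D_1}+\abs{D_2}$.

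The combinatorial bounds on $\abs{D_1}$ and $\abs{D_2}$ exploit the exchangeability of $\rho$: since $\rho(j)$ is uniform on $G$, $\EE p_{j,\rho(j)}=\overline{p}_{j,\bfcdot}'$, and since $(\rho(j),\rho(k))$ is uniform on $G_{\neq}^2$,
\begin{align*}
\EE\abs{(p_{j,\rho(j)}-p_{j,\rho(k)})(p_{k,\rho(j)}-p_{k,\rho(k)})}
=\frac{1}{m(m-1)}\sum_{(r,s)\in G_{\neq}^2}\abs{p_{j,r}-p_{j,s}}\abs{p_{k,r}-p_{k,s}}.
\end{align*}
Pulling $\norminfty{\Delta g}$ out of the expectations therefore yields $\abs{D_1}\le\norminfty{\Delta g}\,\alpha$ and $\abs{D_2}\le\norminfty{\Delta g}\,\beta$. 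For the total variation bound I would take $h=\bbone_A$ and insert $\norminfty{g}\le\min\{1,\sqrt{2/(t\ee)}\}$ and $\norminfty{\Delta g}\le(1-\ee^{-t})/t$ from Lemma \ref{l237315}(\ref{l237315.a}); for the Wasserstein bound I would take $h\in\calfw$ and insert $\norminfty{g}\le 1$ and $\norminfty{\Delta g}\le\min\{1,\tfrac{4}{3}\sqrt{2/(t\ee)}\}$ from Lemma \ref{l237315}(\ref{l237315.b}). Taking the supremum over $A\subseteq\Zpl$, resp.\ $h\in\calfw$, gives the first two inequalities.

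The local bound is the most delicate. Take $h=\bbone_{\{a\}}$; Lemma \ref{l237315}(\ref{l237315.c}) supplies $\norminfty{g_{t,\{a\}}}\le 2(1-\ee^{-t})/t$ together with the $L^1$-type estimate $\EE\abs{\Delta g_{t,\{a\}}(Z)}\le 2c(Z)(1-\ee^{-t})/t$ coming from \eqref{e25705}. Because $\norminfty{\Delta g_{t,\{a\}}}$ is now of the wrong order, I would bound $\abs{D_1}$ and $\abs{D_2}$ by importing this $c(Z)$-estimate inside the expectation, with $Z=W_j+1$ in $D_1$ and $Z=W_{j,k}+1$ in $D_2$; taking the supremum over $a\in\Zpl$ then produces the local norm. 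The main obstacle will be keeping the quadratic weight $(\overline{p}_{j,\bfcdot}')^2$ in $D_1$ while simultaneously extracting the concentration factor $c(W_j)$, since the naive estimate $p_{j,\rho(j)}\le 1$ only delivers $\mu=\sum_j\overline{p}_{j,\bfcdot}'$ in place of $\alpha$. I would handle this by conditioning on $\rho(j)$ and using that, given $\rho(j)=r$, the variable $X_{j,r}$ is Bernoulli of mean $p_{j,r}$ independent of $W_j$, so that a second factor $\EE p_{j,\rho(j)}=\overline{p}_{j,\bfcdot}'$ is preserved alongside the one already standing in front of $\EE(p_{j,\rho(j)}\Delta g(W_j+1))$; an analogous conditioning on the pair $(\rho(j),\rho(k))$ takes care of $\beta$ in $D_2$, completing \eqref{e3862658}.
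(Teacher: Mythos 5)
Your approach is essentially the one the paper uses: from \eqref{e2286584} and Proposition \ref{l298757}, write $\abs{\EE h(W)-\EE h(Y)}\le\abs{t-\mu}\,\norminfty{g}+\abs{D_1}+\abs{D_2}$, then feed in Lemma \ref{l237315} for the right choice of $h$. The total variation and Wasserstein cases are handled exactly as the paper does, with $\norminfty{\Delta g}$ pulled out of $D_1,D_2$ and the $\rho$-averages producing $\alpha$ and $\beta$; those two parts of your argument are correct.

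For the local bound, you identify the right obstacle but your proposed resolution does not close it. You want to separate $p_{j,\rho(j)}$ from $\abs{\Delta g(W_j+1)}$ inside the expectation so that a second factor $\EE p_{j,\rho(j)}=\overline p_{j,\bfcdot}'$ appears alongside $c(W_j)$; but these two random quantities are \emph{not} independent. The fact that, given $\rho(j)=r$, the Bernoulli $X_{j,r}$ is independent of $W_j$ was already exploited in deriving the form of $D_1$; what is needed now is independence of $\rho(j)$ from $(\rho(i))_{i\neq j}$, and this fails because $\rho$ is injective, so $p_{j,\rho(j)}$ and $W_j=\sum_{i\neq j}X_{i,\rho(i)}$ are genuinely coupled. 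The paper does not perform this splitting: it bounds $\abs{D_1}$ and $\abs{D_2}$ directly by $\bigl(\max_{j\in F}\EE\abs{\Delta g(W_j+1)}\bigr)\alpha$ and $\bigl(\max_{(j,k)\in F_\neq^2}\EE\abs{\Delta g(W_{j,k}+1)}\bigr)\beta$ in a single step (writing the expectation over $\rho$ out as a sum over injections $\ell$), and only \emph{afterwards} applies Lemma \ref{l237315}(\ref{l237315.c}), which relies on the identity \eqref{e25705} $\sum_m\abs{\Delta g_{t,\{a\}}(m)}=2\Delta g_{t,\{a\}}(a)$, to control $\EE\abs{\Delta g(W_j+1)}$ by $2c(W_j)(1-\ee^{-t})/t$. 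You should replace the ``conditioning on $\rho(j)$'' step with this extraction of $\max_j\EE\abs{\Delta g(W_j+1)}$ (and $\max_{j,k}\EE\abs{\Delta g(W_{j,k}+1)}$) before invoking part (\ref{l237315.c}) of the lemma.
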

\begin{proof}
Let $t\in(0,\infty)$ and  $Y$ be a $\Po(t)$ distributed random variable.
Let $h\in\RR^{\Zpl}$ such that $h=\bbone_{A}$ for $A\subseteq \Zpl$
or $h\in\calfw$ or $h=\bbone_{\{a\}}$ for $a\in\Zpl$. 
Let $f=h-\EE h(Y)$ and $g:=g_{t, f}$. 
By using \eqref{e2286584}, Proposition \ref{l298757} and \eqref{e11635}, 
we obtain 
\begin{align*}
\abs{\EE h(W)-\EE h(Y)}
&=\abs{\EE(tg(W+1)-Wg(W))}\\
&=\abs{(t-\mu) \EE g(W+1)}+\abs{\EE(\mu g(W+1)-Wg(W))},
\end{align*}
where 
\begin{align*}
\lefteqn{
\abs{\EE(\mu g(W+1)-Wg(W))}}\\
&\leq 
\sum_{j\in F}\EE (\overline{p}_{j,\bfcdot}'p_{j,\rho(j)}
\abs{\Delta g(W_j+1)})\\
&\quad{}+\frac{1}{2m}\sum_{(j,k)\in F_{\neq}^2}
\EE\abs{(p_{j,\rho(j)}-p_{j,\rho(k)})(p_{k,\rho(j)}-p_{k,\rho(k)})
\Delta g(W_{j,k}+1)}\\
&\leq 
\Bigl(\max_{j\in F}\EE \abs{\Delta g(W_j+1)}\Bigr)
\frac{1}{m!}
\sum_{j\in F}\sum_{\ell\in G^F_{\neq}}
\overline{p}_{j,\bfcdot}'p_{j,\ell(j)} \\
&{}\quad+\Bigl(\max_{(j,k)\in F_{\neq}^2}\EE\abs{\Delta g(W_{j,k}+1)}
\Bigr)\frac{1}{2m\, m!}
\sum_{(j,k)\in F_{\neq}^2}\sum_{\ell\in G^F_{\neq}}
\abs{p_{j,\ell(j)}-p_{j,\ell(k)}} \abs{p_{k,\ell(j)}-p_{k,\ell(k)}}\\
&=\Bigl(\max_{j\in F}\EE \abs{\Delta g(W_j+1)}\Bigr)\alpha
+\Bigl(\max_{(j,k)\in F_{\neq}^2}\EE\abs{\Delta g(W_{j,k}+1)}\Bigr)
\beta. 
\end{align*}
The proof is easily completed by using Lemma \ref{l237315}. 
\end{proof}


\begin{lemma}  
We have
\begin{align}\label{e8164}
\gamma+\gamma'=\gamma'',\qquad 
\gamma''+\gamma'''
&=\frac{1}{n}\sum_{(j,r)\in\set{n}^2}p_{j,r}^2
-\sum_{j\in\set{n}}\overline{p}_{j,\bfcdot}^2 \,.
\end{align}
\end{lemma}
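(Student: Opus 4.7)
The plan is to derive both identities by elementary manipulation, working throughout with the generic pair $a=p_{j,r}-p_{j,s}$ and $b=p_{k,r}-p_{k,s}$ and writing $x_+=\max\{0,x\}$, $x_-=(-x)_+$, so that $x=x_+-x_-$, $|x|=x_++x_-$, and $x_+x_-=0$.

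For the first identity $\gamma+\gamma'=\gamma''$, I would first symmetrize $\gamma'$ by exchanging $r$ and $s$ in its defining sum: this yields
\begin{align*}
\gamma'=\frac{1}{n^2(n-1)}\sum_{(j,k)\in\set{n}_{\neq}^2}\sum_{(r,s)\in\set{n}_{\neq}^2}
\bigl[(p_{j,r}-p_{j,s})_+(p_{k,s}-p_{k,r})_+ +(p_{j,s}-p_{j,r})_+(p_{k,r}-p_{k,s})_+\bigr],
\end{align*}
i.e.\ the summand is $a_+b_-+a_-b_+$. The elementary identity
\begin{align*}
|a||b|-ab=(a_++a_-)(b_++b_-)-(a_+-a_-)(b_+-b_-)=2(a_+b_-+a_-b_+)
\end{align*}
then gives
\begin{align*}
\gamma''-\gamma=\frac{1}{2n^2(n-1)}\sum_{(j,k),(r,s)}(|a||b|-ab)
=\frac{1}{n^2(n-1)}\sum_{(j,k),(r,s)}(a_+b_-+a_-b_+)=\gamma',
\end{align*}
which is the first identity in \eqref{e8164}.

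For the second identity, combine the defining sums directly:
\begin{align*}
\gamma''+\gamma'''
=\frac{1}{4n^2(n-1)}\sum_{(j,k),(r,s)}\bigl[2|a||b|+(|a|-|b|)^2\bigr]
=\frac{1}{4n^2(n-1)}\sum_{(j,k),(r,s)}\bigl(|a|^2+|b|^2\bigr).
\end{align*}
By the symmetry $(j,k)\leftrightarrow(k,j)$ on $\set{n}_{\neq}^2$ the two pieces $|a|^2$ and $|b|^2$ contribute equally, so
\begin{align*}
\gamma''+\gamma'''
=\frac{1}{2n^2(n-1)}\sum_{(j,k)\in\set{n}_{\neq}^2}\sum_{(r,s)\in\set{n}_{\neq}^2}(p_{j,r}-p_{j,s})^2.
\end{align*}
The inner summand is independent of $k$, so the sum over $k\in\set{n}\setminus\{j\}$ cancels the factor $n-1$; moreover the constraint $r\neq s$ can be dropped since the summand vanishes when $r=s$. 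A direct expansion of $\sum_{r,s\in\set{n}}(p_{j,r}-p_{j,s})^2=2n\sum_r p_{j,r}^2-2n^2\overline{p}_{j,\bfcdot}^{\,2}$ then yields
\begin{align*}
\gamma''+\gamma'''=\frac{1}{2n^2}\sum_{j\in\set{n}}\bigl(2n\sum_{r\in\set{n}}p_{j,r}^2-2n^2\overline{p}_{j,\bfcdot}^{\,2}\bigr)
=\frac{1}{n}\sum_{(j,r)\in\set{n}^2}p_{j,r}^2-\sum_{j\in\set{n}}\overline{p}_{j,\bfcdot}^{\,2}.
\end{align*}

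Neither step presents a real obstacle; the only point requiring care is the initial symmetrization of $\gamma'$ (the sum must first be rewritten in a symmetric form in the $r,s$ variables before the pointwise identity $|a||b|-ab=2(a_+b_-+a_-b_+)$ can be applied term by term), and keeping track of the factor $2$ arising from the $(j,k)$–symmetrization in the second identity.
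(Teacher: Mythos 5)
Your proof is correct and follows essentially the same route as the paper's: the pointwise identity $|a||b|-ab=2(a_+b_-+a_-b_+)$ combined with the $r\leftrightarrow s$ symmetry gives $\gamma''-\gamma=\gamma'$, and $2|a||b|+(|a|-|b|)^2=a^2+b^2$ combined with the $(j,k)\leftrightarrow(k,j)$ symmetry gives the second identity after a direct expansion. The only difference is that you write out the symmetrization and the final expansion in more detail than the paper does.
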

\begin{proof}
For $a,b\in\RR$, we have 
$\abs{ab}-ab
=2(a_+(-b)_++(-a)_+b_+)$, and therefore 
\begin{align*}
\gamma''-\gamma
&=\frac{1}{2n^2(n-1)}\sum_{(j,k)\in\set{n}_{\neq}^2}
\sum_{(r,s)\in\set{n}_{\neq}^2}
(\abs{p_{j,r}-p_{j,s}}\abs{p_{k,r}-p_{k,s}}
-(p_{j,r}-p_{j,s})(p_{k,r}-p_{k,s}))\\
&=\frac{1}{n^2(n-1)}\sum_{(j,k)\in\set{n}_{\neq}^2}
\sum_{(r,s)\in\set{n}_{\neq}^2}
((p_{j,r}-p_{j,s})_+(p_{k,s}-p_{k,r})_+ 
+(p_{j,s}-p_{j,r})_+(p_{k,r}-p_{k,s})_+)\\
&=\frac{2}{n^2(n-1)}\sum_{(j,k)\in\set{n}_{\neq}^2}
\sum_{(r,s)\in\set{n}_{\neq}^2}
(p_{j,r}-p_{j,s})_+(p_{k,s}-p_{k,r})_+
=\gamma'.
\end{align*}
Further, 
\begin{align*}
\gamma''+\gamma'''
&=\frac{1}{4n^2(n-1)}\sum_{(j,k)\in\set{n}_{\neq}^2}
\sum_{(r,s)\in\set{n}_{\neq}^2}
((p_{j,r}-p_{j,s})^2+(p_{k,r}-p_{k,s})^2)\\
&=\frac{1}{n}\sum_{(j,r)\in\set{n}^2}p_{j,r}^2
-\sum_{j\in\set{n}}\overline{p}_{j,\bfcdot}^2 \,.\qedhere
\end{align*}
\end{proof}

\begin{proof}[Proof of Theorem \ref{t937587}]
Using Corollary \ref{c8263895} with 
$F=G=\set{n}$, $m=n$, $\rho=\pi$, $W=S_n$, 
$\overline{p}_{j,\bfcdot}'=\overline{p}_{j,\bfcdot}$, 
and $\mu=t=\lambda$, we obtain
\begin{align*}
\dtv{P^{S_n}}{\Po(\lambda)}
&\leq 
\frac{1-\ee^{-\lambda}}{\lambda}\Bigl(
\sum_{j=1}^n\overline{p}_{j,\bfcdot}^2+\gamma''\Bigr).
\end{align*}
The proof is easily completed by using \eqref{e82165} and 
\eqref{e8164}. 
\end{proof}
The proof of Theorem \ref{t23597} is analogous and therefore omitted. 
The following lemma is needed in the proof of Theorems \ref{t2987658} 
and \ref{t24745}.

\begin{lemma}  
Let $t\in(0,\infty)$, $Y$ be a $\Po(t)$ distributed random variable
and $h\in\RR^{\Zpl}$ such that $\EE \abs{h(Y+3)}<\infty$. Set 
$f=h-\EE h(Y)$ and  $g=g_{t,f}$. 
Then
\begin{align}\label{e2864643}
\int h \,\dd ((\dirac_1-\dirac_0)^{*2}*\Po(t))
=-2\EE(\Delta g(Y+1))
\end{align}
is finite. 
\end{lemma}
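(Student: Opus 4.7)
The plan is to expand $(\dirac_1-\dirac_0)^{*2}=\dirac_2-2\dirac_1+\dirac_0$, so that the integral on the left-hand side of \eqref{e2864643} reduces to $\EE h(Y+2)-2\EE h(Y+1)+\EE h(Y)$. First I would verify finiteness of all three expectations: for $k\in\{0,1,2\}$ we have $\EE\abs{h(Y+k)}=\sum_{n\geq k}\po(n-k,t)\abs{h(n)}$, and since the ratio $\po(n-k,t)/\po(n-3,t)=t^{3-k}(n-3)!/(n-k)!$ is bounded in $n\geq 3$, the hypothesis $\EE\abs{h(Y+3)}<\infty$ propagates to $\EE\abs{h(Y+k)}<\infty$ for $k\in\{0,1,2\}$.

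Next, I would substitute $m=Y+1$ and $m=Y+2$ into the Stein equation \eqref{e1864576}, obtaining
\begin{align*}
h(Y+1)&=\EE h(Y)+tg(Y+2)-(Y+1)g(Y+1),\\
h(Y+2)&=\EE h(Y)+tg(Y+3)-(Y+2)g(Y+2).
\end{align*}
Taking expectations and applying the Poisson size-bias identity $\EE[Y\varphi(Y)]=t\EE[\varphi(Y+1)]$ with $\varphi(m)=g(m+1)$ and $\varphi(m)=g(m+2)$ is designed to annihilate the $tg$-terms, yielding $\EE h(Y+1)=\EE h(Y)-\EE g(Y+1)$ and $\EE h(Y+2)=\EE h(Y)-2\EE g(Y+2)$. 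Forming the second difference, the $\EE h(Y)$ contributions cancel and the surviving terms combine into $2\EE g(Y+1)-2\EE g(Y+2)=-2\EE\Delta g(Y+1)$, as claimed.

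The main obstacle is justifying the two applications of the size-bias identity, which require $Yg(Y+1)$ and $Yg(Y+2)$ to be absolutely integrable under $Y\sim\Po(t)$. I would use \eqref{e234567} to bound $\abs{g(m+k)}\leq (t\po(m+k-1,t))^{-1}\sum_{j>m+k-1}\po(j,t)\abs{f(j)}$ for $k\in\{1,2\}$, then invoke Fubini's theorem to interchange summations; exploiting the identity $m\po(m,t)=t\po(m-1,t)$ and the comparisons $\po(n-k,t)\leq C_k\po(n-3,t)$ from the first paragraph, the resulting double sums reduce to finite linear combinations of $\EE\abs{f(Y+1)}$, $\EE\abs{f(Y+2)}$, and $\EE\abs{f(Y+3)}$, all of which are finite. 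This simultaneously establishes the identity and the finiteness assertion.
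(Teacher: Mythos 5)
Your proof is correct and follows essentially the same route as the paper: expand $(\dirac_1-\dirac_0)^{*2}*\Po(t)$ into the second difference of shifted Poisson laws, express $h(Y+1)$ and $h(Y+2)$ via the Stein equation, cancel the $tg$-terms with the Poisson size-bias identity $\EE(Y\varphi(Y))=t\EE\varphi(Y+1)$, and justify integrability via the representation \eqref{e234567}. The only cosmetic difference is that you apply the size-bias step to each shift separately and then take the second difference, whereas the paper first forms $\EE(-2f(Y+1)+f(Y+2))$ and then cancels; these are equivalent since $\EE f(Y)=0$.
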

\begin{proof}
If $h_0:\,\Zpl\longrightarrow \RR$ is an arbitrary 
function, then $\EE\abs{Yh_0(Y)}<\infty$ if and only if 
$\EE \abs{h_0(Y+1)}<\infty$. In this case, we have that 
$\EE (t h_0(Y+1))=\EE(Yh_0(Y))$.
This implies that 
$\EE \abs{h(Y+j)}<\infty$ for $j\in\{0,1,2,3\}$. 
Since 
\begin{align*}
(\dirac_1-\dirac_0)^{*2}*\Po(t)
=(\dirac_2-2\dirac_1+\dirac_0)*\Po(t)
=P^{Y}-2P^{Y+1}+P^{Y+2}, 
\end{align*}
the left-hand side of \eqref{e2864643} is finite. 
Using \eqref{e1864576},  we obtain 
\begin{align*}
\lefteqn{\int h \,\dd ((\dirac_1-\dirac_0)^{*2}*\Po(t))}\\
&=\EE(h(Y)-2h(Y+1)+h(Y+2))\\
&=\EE(-2f(Y+1)+f(Y+2))\\
&=\EE(-2(t g(Y+2)-(Y+1)g(Y+1))+t g(Y+3)-(Y+2)g(Y+2)). 
\end{align*}
Since 
$\EE f(Y)
=0$, we obtain from \eqref{e234567} that, for $j\in\{0,1,2\}$,
\begin{align*}
\EE\abs{g(Y+j+1)}
&\leq \sum_{m=0}^\infty  
\frac{\po(m,t)}{t\po(m+j,t)}\sum_{k=m+j+1}^\infty\po(k,t)\abs{f(k)}
&= \frac{1}{(j+1)}\EE\abs{f(Y+j+1)} 
<\infty.
\end{align*}
Hence $\EE(tg(Y+j+1))=\EE(Yg(Y+j))$ for $j\in\{0,1,2\}$, 
which implies that
\begin{align*}
\lefteqn{\int h \,\dd ((\dirac_1-\dirac_0)^{*2}*\Po(t))}\\
&=\EE(-2(Y g(Y+1)-(Y+1)g(Y+1))+Y g(Y+2)-(Y+2)g(Y+2))\\
&=-2\EE(\Delta g(Y+1)).\qedhere
\end{align*}
\end{proof}

\begin{proof}[Proof of Theorems \ref{t2987658} and \ref{t24745}]
For $\ell\in \set{n}^n_{\neq}$ and a real valued random variable $Z$, 
we write $\EE_\ell Z=\EE_\ell(Z)=\EE(\bbone_{\{\pi=\ell\}}Z)$ 
whenever this exists. Let $Y$ be a $\Po(\lambda)$ distributed random 
variable independent of $\pi$ and $X$  
and let $h=\bbone_{A}\in\RR^{\Zpl}$ for a set 
$A\subseteq \Zpl$ or $h\in\calfw$. 
Let $f=h-\EE h(Y)$ and $g=g_{\lambda,f}$. 
Using \eqref{e1864576} and Proposition \ref{l298757} 
with $F=G=\set{n}$, $\rho=\pi$, $W=S_n$, and $\mu=\lambda$, we obtain 
\begin{align}
\lefteqn{\EE h(S_n)-\EE h(Y)
=\EE f(S_n) 
=\EE(\lambda g(S_n+1)-S_ng(S_n))}\nonumber\\
&=\sum_{j=1}^n\EE(\overline{p}_{j,\bfcdot}p_{j,\pi(j)}
\Delta g(T_j+1)) \nonumber \\
&\quad{}+\frac{1}{2n}\sum_{(j,k)\in \set{n}_{\neq}^2}
\EE((p_{j,\pi(j)}-p_{j,\pi(k)})(p_{k,\pi(j)}-p_{k,\pi(k)})
\Delta g(T_{j,k}+1)) \nonumber\\
&=\sum_{j=1}^n\sum_{r=1}^n\overline{p}_{j,\bfcdot}p_{j,r}
\sum_{\ell\in\set{n}_{\neq}^n:\,\ell(j)=r}
\EE_\ell\Delta g(T_j+1)\nonumber\\
&\quad{}+\frac{1}{2n}\sum_{(j,k)\in\set{n}_{\neq}^2}
\sum_{(r,s)\in\set{n}_{\neq}^2}
(p_{j,r}-p_{j,s})(p_{k,r}-p_{k,s})
\sum_{\newatop{\ell\in\set{n}_{\neq}^n:}{\,\ell(j)=r,\ell(k)=s}}
\EE_\ell \Delta g(T_{j,k}+1), \label{e81645}
\end{align}
where $T_j=S_n-X_{j,\pi(j)}$ and 
$T_{j,k}=S_n-X_{j,\pi(j)}-X_{k,\pi(k)}$ 
for $(j,k)\in\set{n}_{\neq}^2$. 
Combining \eqref{e22876346}, \eqref{e82165}, 
\eqref{e2864643}, \eqref{e81645} and \eqref{e2863576}, we obtain
\begin{align}
\EE h(S_n)-\int h \,\dd Q_2
&=\EE f(S_n)
-\Bigl(\sum_{j=1}^n\overline{p}_{j,\bfcdot}^2+\gamma\Bigr)
\EE (\Delta g(Y+1)) 
=D_1+D_2, \label{e287458}
\end{align}
where
\begin{align*}
D_1
&=\sum_{j=1}^n\sum_{r=1}^n\overline{p}_{j,\bfcdot}p_{j,r}
\sum_{\ell\in\set{n}_{\neq}^n:\,\ell(j)=r}
\EE_\ell (\Delta g(T_j+1)-\Delta g(Y+1))\\
D_2&=\frac{1}{2n}\sum_{(j,k)\in\set{n}_{\neq}^2}
\sum_{(r,s)\in\set{n}_{\neq}^2}
(p_{j,r}-p_{j,s})(p_{k,r}-p_{k,s})
\sum_{\newatop{\ell\in\set{n}_{\neq}^n:}{\,\ell(j)=r,\ell(k)=s}}
\EE_\ell (\Delta g(T_{j,k}+1)-\Delta g(Y+1)).
\end{align*}
We note that, for $j,r\in\set{n}$ and $m\in\Zpl$, 
\begin{align}
\sum_{\ell\in\set{n}_{\neq}^n:\,\ell(j)=r}P(\pi=\ell,T_j=m)
&=\frac{1}{n!}\sum_{\ell\in\set{n}_{\neq}^n:\,\ell(j)=r}
P\Bigl(\sum_{i\in\set{n}\setminus\{j\}}X_{i,\ell(i)}=m\Bigr) 
\nonumber \\
&=\frac{1}{n}\sum_{\ell\in(\set{n}\setminus\{r\})_{\neq}^{\set{n}
\setminus\{j\}}} P(\pi_{j,r}=\ell,T_{j,r}'=m)
=\frac{1}{n}P(T_{j,r}'=m), \label{e2239875}
\end{align}
where $T_{j,r}':=\sum_{i\in\set{n}\setminus\{j\}}
X_{i,\pi_{j,r}(i)}$ and the random variable 
$\pi_{j,r}$ is independent of $X$ and has uniform distribution on 
$(\set{n}\setminus\{r\})_{\neq}^{\set{n}\setminus\{j\}}$. 
Hence 
\begin{align}
\lefteqn{\ABS{\sum_{\ell\in\set{n}_{\neq}^n:\,\ell(j)=r}
\EE_\ell(\Delta g(T_j+1)-\Delta g(Y+1))}} \nonumber \\
&=\ABS{\sum_{m\in\Zpl}\sum_{\ell\in\set{n}_{\neq}^n:\,\ell(j)=r}
(P(\pi=\ell,T_j=m)-P(\pi=\ell,Y=m)) \Delta g(m+1)} \nonumber \\
&=\ABS{\frac{1}{n}\sum_{m\in\Zpl}
(P(T_{j,r}'=m)-P(Y=m)) \Delta g(m+1) } \nonumber \\
&\leq \frac{2\norminfty{\Delta g}}{n}\dtv{P^{T_{j,r}'}}{\Po(\lambda)}. 
\label{e2349755}
\end{align}
We have $\EE T_{j,r}'=\lambda_{j,r}'$ for $j,r\in\set{n}$. 
Using Corollary \ref{c8263895} with 
$F=\set{n}\setminus\{j\}$, $G=\set{n}\setminus\{r\}$, $m=n-1$, 
$\rho=\pi_{j,r}$, $W=T_{j,r}'$,
$\overline{p}_{u,\bfcdot}'
=\frac{1}{n-1}\sum_{v\in\set{n}\setminus\{r\}}p_{u,v}$ for 
$u\in\set{n}\setminus\{j\}$, 
$\mu=\lambda_{j,r}'$ and $t=\lambda$, we obtain 
\begin{align*}
\dtv{P^{T_{j,r}'}}{\Po(\lambda)}
&\leq \abs{\lambda-\lambda_{j,r}'}
\min\Big\{1,\sqrt{\frac{2}{\lambda\ee}}\Big\}
+\frac{1-\ee^{-\lambda}}{\lambda}
\Bigl(\sum_{u\in \set{n}\setminus\{j\}}(\overline{p}_{u,\bfcdot}')^2\\
&\quad{}
+\frac{1}{2(n-1)^2(n-2)}
\sum_{(u,u')\in (\set{n}\setminus\{j\})_{\neq}^2}
\sum_{(v,v')\in (\set{n}\setminus\{r\})_{\neq}^2}
\abs{p_{u,v}-p_{u,v'}} \abs{p_{u',v}-p_{u',v'}}\Bigr)\\
&\leq \abs{\lambda-\lambda_{j,r}'}
\min\Big\{1,\sqrt{\frac{2}{\lambda\ee}}\Big\}
+\frac{n^2}{(n-1)^2}\frac{1-\ee^{-\lambda}}{\lambda}
\Bigl(\sum_{u\in \set{n}}\overline{p}_{u,\bfcdot}^2+
\frac{n-1}{n-2}\gamma''\Bigr).
\end{align*}
Similarly as in \eqref{e2239875}, we obtain that,   
for $(j,k),(r,s)\in\set{n}_{\neq}^2$ and $m\in\Zpl$, 
\begin{align*}
\sum_{\newatop{\ell\in\set{n}_{\neq}^n:}{\ell(j)=r,\ell(k)=s}}
P(\pi=\ell,T_{j,k}=m)
&=\frac{1}{n(n-1)}P(T_{j,k,r,s}''=m), 
\end{align*}
where $T_{j,k,r,s}'':=\sum_{i\in\set{n}\setminus\{j,k\}}
X_{i,\pi_{j,k,r,s}(i)}$ and the random variable 
$\pi_{j,k,r,s}$ is independent of $X$ and 
has uniform distribution on 
$(\set{n}\setminus\{r,s\})_{\neq}^{\set{n}\setminus\{j,k\}}$.
Similarly as in \eqref{e2349755}, we get 
\begin{align}
\lefteqn{
\ABS{\sum_{\newatop{\ell\in\set{n}_{\neq}^n:}{
\ell(j)=r,\ell(k)=s}}
\EE_\ell ( \Delta g(T_{j,k}+1)-\Delta g(Y+1) )} } \nonumber \\
&=\frac{1}{n(n-1)}\ABS{\sum_{m\in\Zpl}
(P(T_{j,k,r,s}''=m)-P(Y=m)) \Delta g(m+1)} \nonumber \\
&\leq \frac{2 \norminfty{\Delta g}}{n(n-1)}
\dtv{P^{T_{j,k,r,s}''}}{\Po(\lambda)}. \label{e235845}
\end{align}
We note that  
$\lambda_{j,k,r,s}''
=\EE T_{j,k,r,s}''$ 
for $(j,k),(r,s)\in\set{n}_{\neq}^2$.
Using Corollary \ref{c8263895} with 
$F=\set{n}\setminus\{j,k\}$, $G=\set{n}\setminus\{r,s\}$, $m=n-2$, 
$\rho=\pi_{j,k,r,s}$, $W=T_{j,k,r,s}''$,
$\overline{p}_{u,\bfcdot}'
=\frac{1}{n-2}\sum_{v\in\set{n}\setminus\{r,s\}}p_{u,v}$
for $u\in\set{n}\setminus\{j,k\}$, 
$\mu=\lambda_{j,k,r,s}''$ and $t=\lambda$, we obtain 
\begin{align*}
\lefteqn{\dtv{P^{T_{j,k,r,s}''}}{\Po(\lambda)}
\leq \abs{\lambda-\lambda_{j,k,r,s}''}
\min\Big\{1,\sqrt{\frac{2}{\lambda\ee}}\Big\}
+\frac{1-\ee^{-\lambda}}{\lambda}
\Bigl(\sum_{u\in \set{n}\setminus\{j,k\}}
(\overline{p}_{u,\bfcdot}')^2}\\
&\quad{}
+\frac{1}{2(n-2)^2(n-3)}
\sum_{(u,u')\in (\set{n}\setminus\{j,k\})_{\neq}^2}
\sum_{(v,v')\in (\set{n}\setminus\{r,s\})_{\neq}^2}
\abs{p_{u,v}-p_{u,v'}} \abs{p_{u',v}-p_{u',v'}}\Bigr)\\
&\leq \abs{\lambda-\lambda_{j,k,r,s}''}
\min\Big\{1,\sqrt{\frac{2}{\lambda\ee}}\Big\}
+\frac{n^2}{(n-2)^2}\frac{1-\ee^{-\lambda}}{\lambda}
\Bigl(\sum_{u\in \set{n}}\overline{p}_{u,\bfcdot}^2
+\frac{n-1}{n-3}\gamma''\Bigr).
\end{align*}
Combining the inequalities above, we see that
\begin{align*}
\abs{D_1}
&\leq \frac{2\norminfty{\Delta g} }{n}
\sum_{j=1}^n\sum_{r=1}^n\overline{p}_{j,\bfcdot}p_{j,r}
\dtv{P^{T_{j,r}'}}{\Po(\lambda)}\\
&\leq 
\norminfty{\Delta g} \min\Big\{1,\sqrt{\frac{2}{\lambda\ee}}\Big\}
\varepsilon_1  
+ 2\norminfty{\Delta g}
\frac{n^2}{(n-1)^2}\frac{1-\ee^{-\lambda}}{\lambda}
\sum_{j=1}^n\overline{p}_{j,\bfcdot}^2
\Bigl(\sum_{u\in \set{n}}\overline{p}_{u,\bfcdot}^2+
\frac{n-1}{n-2}\gamma''\Bigr)
\end{align*}
and 
\begin{align*}
\abs{D_2}
&\leq \frac{\norminfty{\Delta g}}{n^2(n-1)} 
\sum_{(j,k)\in\set{n}_{\neq}^2}\sum_{(r,s)\in\set{n}_{\neq}^2}
\abs{p_{j,r}-p_{j,s}}\abs{p_{k,r}-p_{k,s}}
\dtv{P^{T_{j,k,r,s}''}}{\Po(\lambda)}\\
&\leq 
\norminfty{\Delta g}\varepsilon_2
\min\Big\{1,\sqrt{\frac{2}{\lambda\ee}}\Big\}
+2\norminfty{\Delta g}
\frac{n^2}{(n-2)^2}\frac{1-\ee^{-\lambda}}{\lambda}
\Bigl(\sum_{u\in \set{n}}\overline{p}_{u,\bfcdot}^2
+\frac{n-1}{n-3}\gamma''\Bigr)\gamma''.
\end{align*}
Therefore, using \eqref{e287458},  
\begin{align*}
\ABS{\EE h(S_n)-\int h \,\dd Q_2}
&\leq \abs{D_1}+\abs{D_2}\\
&\leq \norminfty{\Delta g}
\Bigl(\min\Big\{1,\sqrt{\frac{2}{\lambda\ee}}\Big\}
(\varepsilon_1+\varepsilon_2) 
+\frac{1-\ee^{-\lambda}}{\lambda}
\varepsilon_3\Bigr) 
=\norminfty{\Delta g} \varepsilon. 
\end{align*}
In view of  \eqref{e11635} and \eqref{e116361}, 
we see that \eqref{e28649837} and \eqref{e1289483} hold. 

We now give a proof of \eqref{e02837576}. Here, we  
consider the special case that 
$a\in\Zpl$, $h=\bbone_{\{a\}}\in\RR^{\Zpl}$, $f=h-\po(a,\lambda)$ and 
$g=g_{\lambda,\{a\}}$.  
It follows from \eqref{e25705} that, for an arbitrary 
$\Zpl$-valued random variable $Z$,  
\begin{align*}
\ABS{\sum_{m=0}^\infty
(P(Z=m)-P(Y=m)) \Delta g(m+1) }
&\leq \normloc{P^Z-\Po(\lambda)}\sum_{m=0}^\infty  \abs{\Delta g(m)}\\
&\leq 2 \norminfty{\Delta g} \normloc{P^Z-\Po(\lambda)}.
\end{align*}
Under the present assumptions, this leads 
to the following improvements of \eqref{e2349755} 
and \eqref{e235845} for $(j,k),(r,s)\in\set{n}_{\neq}^2$:
\begin{align*}
\ABS{\sum_{\ell\in\set{n}_{\neq}^n:\,\ell(j)=r}
\EE_\ell(\Delta g(T_j+1)-\Delta g(Y+1))}
&\leq\frac{2\norminfty{\Delta g}}{n}
\normloc{P^{T_{j,r}'}-\Po(\lambda)},\\
\ABS{\sum_{\newatop{\ell\in\set{n}_{\neq}^n:}{
\ell(j)=r,\ell(k)=s}}
\EE_\ell ( \Delta g(T_{j,k}+1)-\Delta g(Y+1) )}
&\leq \frac{2\norminfty{\Delta g} }{n(n-1)}
\normloc{P^{T_{j,k,r,s}''}-\Po(\lambda)}.
\end{align*}
Further, from \eqref{e3862658}, we get  
\begin{align*}
\normloc{P^{T_{j,r}'}-\Po(\lambda)}
&\leq 
2 \frac{1-\ee^{-\lambda}}{\lambda}
\Bigl(\abs{\lambda-\lambda_{j,r}'}
+\frac{\kappa n^2}{(n-1)^2}
\Bigl(\sum_{u\in \set{n}}\overline{p}_{u,\bfcdot}^2
+\frac{n-1}{n-2}\gamma''\Bigr)\Bigr),
\end{align*}
and
\begin{align*}
\normloc{P^{T_{j,k,r,s}''}-\Po(\lambda)}
&\leq 2\frac{1-\ee^{-\lambda}}{\lambda}
\Bigl(\abs{\lambda-\lambda_{j,k,r,s}''}
+\frac{\kappa n^2}{(n-2)^2}
\Bigl(\sum_{u\in \set{n}}\overline{p}_{u,\bfcdot}^2
+\frac{n-1}{n-3}\gamma'' \Bigr)\Bigr) .
\end{align*}
In view of \eqref{e287458}, we see that 
\begin{align*}
\EE h(S_n)-\int h \,\dd Q_2
&=D_1+D_2,
\end{align*}
where 
\begin{align*}
\abs{D_1}
&\leq 
2\norminfty{\Delta g}\frac{1-\ee^{-\lambda}}{\lambda}
\varepsilon_1
+4\kappa \norminfty{\Delta g}
\frac{ n^2}{(n-1)^2}
\frac{1-\ee^{-\lambda}}{\lambda}
\sum_{j=1}^n\overline{p}_{j,\bfcdot}^2 
\Bigl(\sum_{u\in \set{n}}\overline{p}_{u,\bfcdot}^2
+\frac{n-1}{n-2}\gamma''\Bigr),  
\end{align*}
and 
\begin{align*}
\abs{D_2}
&\leq 
2 \norminfty{\Delta g} \frac{1-\ee^{-\lambda}}{\lambda}
\varepsilon_2
+ 4\kappa \norminfty{\Delta g}
\frac{n^2}{(n-2)^2} 
\frac{1-\ee^{-\lambda}}{\lambda}
\Bigl( \sum_{u\in \set{n}}\overline{p}_{u,\bfcdot}^2
+\frac{n-1}{n-3}\gamma'' \Bigr) \gamma''.
\end{align*}
Therefore
\begin{align*}
\ABS{\EE h(S_n)-\int h \,\dd Q_2}
\leq \abs{D_1}+\abs{D_2}  
&\leq 2  \norminfty{\Delta g}\frac{1-\ee^{-\lambda}}{\lambda}
(\varepsilon_1+\varepsilon_2
+\kappa\varepsilon_3),
\end{align*}
which together with \eqref{e11635} implies \eqref{e02837576}. 
\end{proof}


\begin{lemma}  \label{l3297576}
Let $t\in(0,\infty)$. Then 
\begin{gather}
\normtv{(\dirac_1-\dirac_0)^{*2}*\Po(t)}
\leq \min\Big\{4,\frac{3}{t\ee}\Big\},
\label{e81265457}\\
\ABS{\normtv{(\dirac_1-\dirac_0)^{*2}*\Po(t)}
- \frac{4}{t\sqrt{2\pii\ee}}}
\leq \frac{C}{t}\min\Big\{1,\frac{1}{t}\Big\},
\label{e81265458} \\
\normw{(\dirac_1-\dirac_0)^{*2}*\Po(t)}
=\normtv{(\dirac_1-\dirac_0)*\Po(t)}
\leq \min\Bigl\{2,\sqrt{\frac{2}{t\ee}}\Bigr\},
\label{e81265459}\\
\ABS{
\normw{(\dirac_1-\dirac_0)^{*2}*\Po(t)}
-\sqrt{\frac{2}{\pii t}}}
\leq \frac{C}{\sqrt{t}}\min\Bigl\{1,\frac{1}{\sqrt{t}}\Bigr\},
\label{e812654591}\\
\normloc{(\dirac_1-\dirac_0)^{*2}*\Po(t)}
\leq \min\Bigl\{2,\Bigl(\frac{3}{2t\ee}\Bigr)^{3/2}\Bigr\},
\label{e81265460}\\
\ABS{\normloc{(\dirac_1-\dirac_0)^{*2}*\Po(t)}
-\frac{1}{\sqrt{2\pii} t^{3/2}}}
\leq \frac{C}{t^{3/2}}\min\Bigl\{1,\frac{1}{t}\Bigr\}.
\label{e812654601}
\end{gather}
The constants $\frac{3}{\ee}$, $\sqrt{\frac{2}{\ee}}$ and 
$(\frac{3}{2\ee})^{3/2}$ on the right-hand sides of 
\eqref{e81265457}, \eqref{e81265459} and \eqref{e81265460} are 
the best possible. 
\end{lemma}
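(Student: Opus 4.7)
The plan is to work from an explicit formula for the counting density of $Q:=(\dirac_1-\dirac_0)^{*2}*\Po(t)$. Two applications of $\po(m-1,t)=(m/t)\po(m,t)$ give, with the convention $\po(k,t)=0$ for $k<0$,
\begin{align*}
f_Q(m) = \po(m,t)\,\frac{(m-t)^2-m}{t^2}, \qquad m\in\Zpl.
\end{align*}
The quadratic $(m-t)^2-m$ vanishes at $m_\pm = t+\tfrac12\pm\sqrt{t+\tfrac14}$, so $f_Q(m)\geq 0$ for integer $m$ outside $(m_-,m_+)$ and $f_Q(m)\leq 0$ for integer $m$ inside.

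For \eqref{e81265459} and \eqref{e812654591} I would factor $Q=(\dirac_1-\dirac_0)*R$ with $R:=(\dirac_1-\dirac_0)*\Po(t)$ and invoke the identity $\normw{(\dirac_1-\dirac_0)*R}=\normtv{R}$ from \eqref{e2985839}. The density $\po(m-1,t)-\po(m,t)=\po(m,t)(m/t-1)$ of $R$ changes sign exactly once at $m=t$, so a single telescoping gives $\normtv{R}=2\po(\floor{t},t)$. The classical uniform estimate $\sqrt{t}\,\po(\floor{t},t)\leq 1/\sqrt{2\ee}$ (attained at $t=\tfrac12$, where $\floor{t}=0$) yields \eqref{e81265459}, while Stirling's expansion $\po(\floor{t},t) = 1/\sqrt{2\pi t}+O(t^{-3/2})$ yields \eqref{e812654591}.

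For \eqref{e81265457} and \eqref{e81265458} I would write $f_Q(m)=a(m)-a(m-1)$ with $a(m):=\po(m,t)-\po(m-1,t)$, and let $m_1,m_2$ denote the nearest integers bracketing $(m_-,m_+)$ from outside. Each of the three constant-sign partial sums of $f_Q$ then telescopes, producing the closed form
\begin{align*}
\normtv{Q}=2\bigl[\po(m_1,t)-\po(m_1-1,t)-\po(m_2-1,t)+\po(m_2-2,t)\bigr].
\end{align*}
A direct evaluation at $t=1$ (so $m_1=0$, $m_2=3$) gives $\normtv{Q}=3/\ee$, which turns out to be the supremum of $t\,\normtv{Q}(t)$ over $t\in(0,\infty)$; case analysis on the finitely many intervals of $t$ on which $(m_1,m_2)$ is constant confirms both the uniform bound $3/(t\ee)$ and the optimality of the constant $3/\ee$. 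The refinement \eqref{e81265458} then comes from applying Stirling to each of the four Poisson probabilities in the closed form, using $|m_i-t|=\sqrt{t}+O(1)$.

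For \eqref{e81265460} and \eqref{e812654601} I would proceed directly from $\normloc{Q}=\sup_m \po(m,t)\abs{(m-t)^2-m}/t^2$. For large $t$ the Gaussian behaviour $\po(m,t)\sim\exp(-(m-t)^2/(2t))/\sqrt{2\pi t}$ combined with the factor $(m-t)^2/t^2$ is maximized near $m-t\approx\pm\sqrt{3t}$; optimizing $x^2\ee^{-x^2/2}$ produces the leading asymptotic $1/(\sqrt{2\pi}\,t^{3/2})$ and the uniform constant $(3/(2\ee))^{3/2}$, together with the standard remainder estimate $O(t^{-5/2})$. The main technical obstacle throughout is maintaining uniform control of $\po(m_i,t)$ as the integer points $m_i$ depend on $t$ through $\sqrt{t+1/4}$, and in particular identifying the precise maximizers (such as $t=1$ for the TV bound, $t=1/2$ for the Wasserstein bound) to certify that the stated constants are best possible.
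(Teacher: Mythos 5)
Your approach is a direct, self-contained calculation from the explicit counting density $f_Q(m)=\po(m,t)\,((m-t)^2-m)/t^2$, whereas the paper simply cites earlier work for each inequality (a lemma of Roos for \eqref{e81265457}, \eqref{e81265460} and the optimality claims, another lemma of Roos for \eqref{e81265458} and \eqref{e812654601}, Deheuvels--Pfeifer for \eqref{e81265459}, and a proposition of Roos for \eqref{e812654591}). For the total-variation and Wasserstein parts your outline is sound: the single sign change of $\po(m-1,t)-\po(m,t)$ gives $\normw{Q}=2\po(\floor{t},t)$, the double sign change of $f_Q$ across $(m_-,m_+)$ telescopes to the four-term closed form for $\normtv{Q}$, and $t=\tfrac12$, $t=1$ are indeed the extremal points delivering $\sqrt{2/\ee}$ and $3/\ee$. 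One thing must be argued rather than asserted: the intervals on which $(m_1,m_2)$ is constant are infinite in number, not finite; the case check closes only because $t\,\normtv{Q}\to 4/\sqrt{2\pi\ee}<3/\ee$ as $t\to\infty$, so beyond some $t_0$ no further intervals require individual treatment.

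The local-norm part, however, contains a genuine error. Writing $m=t+x\sqrt t$ one has $(m-t)^2-m=t(x^2-1)-x\sqrt t$, so for large $t$ the profile governing $\sup_m\abs{f_Q(m)}$ is $\abs{x^2-1}\,\ee^{-x^2/2}\big/(\sqrt{2\pi}\,t^{3/2})$, not $x^2\ee^{-x^2/2}\big/(\sqrt{2\pi}\,t^{3/2})$. Dropping the $-m$ term discards exactly the dominant contribution: $\abs{x^2-1}\,\ee^{-x^2/2}$ is maximized at $x=0$, with value $1$, attained at $m=\floor{t}$ where $f_Q(m)\approx-\po(\floor{t},t)/t$; the point $x^2=3$ you identify only maximizes the $x^2>1$ branch, with the strictly smaller value $2\ee^{-3/2}$. (Also $x^2\ee^{-x^2/2}$ peaks at $x^2=2$, not $x^2=3$, with value $2/\ee$, which yields neither $1/\sqrt{2\pi}$ nor $(3/(2\ee))^{3/2}$.) Once $x=0$ is recognized as the maximizer you indeed get $\normloc{Q}\sim 1/(\sqrt{2\pi}\,t^{3/2})$, but the uniform constant $(3/(2\ee))^{3/2}$ is strictly larger than $1/\sqrt{2\pi}$ and cannot be obtained from the Gaussian regime at all; as with $3/\ee$ in the TV case it must come from a separate finite-$t$ optimization of $t^{3/2}\normloc{Q}$. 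As written, \eqref{e81265460} and \eqref{e812654601} are not established by your argument.
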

\begin{proof}
For \eqref{e81265457} and \eqref{e81265460} and the optimality 
of the constants $\frac{3}{\ee}$ and $(\frac{3}{2\ee})^{3/2}$, see 
\cite[Lemma 3]{MR1841404}. 
Inequality \eqref{e81265458} was proved in 
\cite[Lemma 5]{MR1965122}; the proof of \eqref{e812654601} 
is analogous using formula (45) in \cite{MR1967760}. 
The equality in \eqref{e81265459} 
follows from \eqref{e2985839}; 
the inequality and the optimality  of the constant
$\sqrt{\frac{2}{\ee}}$ are contained in
\cite[formula (3.8)]{MR0996692}.
Inequality \eqref{e812654591} follows from the more general 
Proposition 4 in \cite[]{MR1735783}. 
\end{proof}
\begin{remark}  
We do not know, if $\frac{1}{\sqrt{t}}$ in the minimum term 
in \eqref{e812654591} can by replaced by $\frac{1}{t}$. 
The norm terms in Lemma \ref{l3297576} can be evaluated using the 
zeros of some Charlier polynomials. 
The details are omitted here; more general identities can be found in 
\cite[Corollaries 1,2]{MR1735783}. 
\end{remark}

\begin{proof}[Proof of Corollary \ref{c2975766}]
Using \eqref{e22876346}, 
Theorem \ref{t2987658}, \eqref{e81265457} and \eqref{e81265458}, we 
obtain 
\begin{align*}
\dtv{P^{S_n}}{\Po(\lambda)}
&=\frac{1}{2}\normtv{P^{S_n}-\Po(\lambda)}\\ 
&\leq\frac{1}{4}(\lambda-\Var S_n) 
\normtv{(\dirac_1-\dirac_0)^{*2}*\Po(\lambda)}+\dtv{P^{S_n}}{Q_2}\\
&\leq \min\Big\{1,\frac{3}{4\lambda \ee}\Big\}(\lambda-\Var S_n) 
+\frac{1-\ee^{-\lambda}}{\lambda}\varepsilon
\end{align*}
and
\begin{align*}
\lefteqn{\ABS{\dtv{P^{S_n}}{\Po(\lambda)}
-\frac{\lambda-\Var S_n}{\sqrt{2\pii \ee} \,\lambda}}
=\ABS{\frac{1}{2}\normtv{P^{S_n}-\Po(\lambda)}
-\frac{\lambda-\Var S_n}{\sqrt{2\pii \ee} \,\lambda}}}\\
&\leq\frac{1}{2}\Normtv{P^{S_n}-\Po(\lambda)
+\frac{1}{2}(\lambda-\Var S_n) (\dirac_1-\dirac_0)^{*2}*\Po(\lambda)}\\
&\quad{}+\frac{1}{4}(\lambda-\Var S_n) 
\ABS{\normtv{(\dirac_1-\dirac_0)^{*2}*\Po(\lambda)}
-\frac{4}{\sqrt{2\pii \ee}\,\lambda}}\\
&\leq\frac{1}{4}(\lambda-\Var S_n) 
\ABS{\normtv{(\dirac_1-\dirac_0)^{*2}*\Po(\lambda)}
-\frac{4}{\lambda\sqrt{2\pii \ee}}}
+\frac{1}{2}\normtv{P^{S_n}-Q_2}\\
&\leq \frac{C}{\lambda}
\min\Bigl\{1,\frac{1}{\lambda}\Bigr\}(\lambda-\Var S_n) 
+\frac{1-\ee^{-\lambda}}{\lambda}\varepsilon\\
&\leq C\min\Bigl\{1,\frac{1}{\lambda}\Bigr\}
 \Bigl(\frac{\lambda-\Var S_n}{\lambda}+\varepsilon\Bigr). \qedhere
\end{align*}
\end{proof}
The proof of Corollary \ref{c3297583} is analogous and therefore 
omitted. 

\section{Remaining proofs} \label{s128646}
\begin{proof}[Proof of \eqref{e82165}]
For $(j,k)\in\set{n}_{\neq}^2$, we have
\begin{gather*}
\Var X_{j,\pi(j)} 
=\overline{p}_{j,\bfcdot}(1-\overline{p}_{j,\bfcdot}),\quad
\EE(X_{j,\pi(j)}X_{k,\pi(k)})
=\frac{1}{n(n-1)}\sum_{(r,s)\in\set{n}_{\neq}^2}p_{j,r}p_{k,s},\\
\Cov(X_{j,\pi(j)},X_{k,\pi(k)})
=\frac{1}{n-1}\Bigl(\overline{p}_{j,\bfcdot}\overline{p}_{k,\bfcdot}
-\frac{1}{n}\sum_{r\in\set{n}}p_{j,r}p_{k,r}\Bigr).
\end{gather*}
Therefore
\begin{align*}
\sum_{(j,k)\in\set{n}_{\neq}^2}\Cov(X_{j,\pi(j)},X_{k,\pi(k)})
&=\frac{1}{n^2(n-1)}\sum_{(j,k)\in\set{n}_{\neq}^2}
\sum_{(r,s)\in\set{n}_{\neq}^2}p_{j,r}(p_{k,s}-p_{k,r})\\
&=-\frac{1}{2n^2(n-1)}\sum_{(j,k)\in\set{n}_{\neq}^2}
\sum_{(r,s)\in\set{n}_{\neq}^2}(p_{j,r}-p_{j,s})(p_{k,r}-p_{k,s})
=-\gamma.
\end{align*}
Hence
\begin{align*}
\Var S_n
&=\sum_{j=1}^n\Var X_{j,\pi(j)}
+\sum_{(j,k)\in\set{n}_{\neq}^2}
\Cov(X_{j,\pi(j)},X_{k,\pi(k)})
=\lambda-\sum_{j=1}^n \overline{p}_{j,\bfcdot}^2-\gamma.\qedhere
\end{align*}
\end{proof}
\begin{proof}[Proof of Lemma \ref{l387256}]{}
\begin{enumerate}[(a)]

\item It is easily shown that, for $a,b\in[0,1]$, we have 
$(a-b)_+\leq a(1-b)$ with equality if and only if 
$(a,b)\notin(0,1)^2$. 
In particular, this implies \eqref{e1876185}. 
To prove the second assertion, we note that, for $a,b,c,d\in[0,1]$, 
we have 
$(a-b)_+(c-d)_+\leq a(1-b)c(1-d)$ with equality if and only if one of 
the following conditions is true 
\begin{enumerate}[(i)]

\item $(a,b)\notin(0,1)^2$ and $(c,d)\notin(0,1)^2$,

\item $(a,b)\in(0,1)^2$ and ($c=0$ or $d=1$), 

\item $(c,d)\in(0,1)^2$ and ($a=0$ or $b=1$). 

\end{enumerate}
The proof of the second assertion is now easily completed:
Let us first show necessity and suppose that equality holds. 
For all $(j,k),(r,s)\in\set{n}_{\neq}^2$, we then have 
\begin{align*}
(p_{j,r}-p_{j,s})_+(p_{k,s}-p_{k,r})_+
&=p_{j,r}(1-p_{j,s})p_{k,s}(1-p_{k,r})
\mbox{ and } \\
(p_{j,s}-p_{j,r})_+(p_{k,r}-p_{k,s})_+
&=p_{j,s}(1-p_{j,r})p_{k,r}(1-p_{k,s}). 
\end{align*}
Now, if $j\in\set{n}$ and $(r,s)\in\set{n}_{\neq}^2$ such that 
$(p_{j,r},p_{j,s})\in(0,1)^2$, then 
$(p_{j,r}-p_{j,s})_+<p_{j,r}(1-p_{j,s})$ and 
$(p_{j,s}-p_{j,r})_+<p_{j,s}(1-p_{j,r})$.
This together with the equalities above implies that, for all 
$k\in\set{n}\setminus\{j\}$, we have 
$p_{k,s}(1-p_{k,r})=0$ and $p_{k,r}(1-p_{k,s})=0$
that is ($p_{k,r}=1$ or $p_{k,s}=0$)
and ($p_{k,r}=0$ or $p_{k,s}=1$), which is equivalent to 
$p_{k,r}=p_{k,s}\in\{0,1\}$. This proves necessity. 
Sufficiency is shown similarly.  

\item We have
\begin{align*}
\gamma'
&\leq  \frac{2}{n^2(n-1)}\sum_{(j,k)\in\set{n}_{\neq}^2}
\sum_{(r,s)\in\set{n}_{\neq}^2}p_{j,r}p_{k,s}\\
&=\frac{2}{n-1}\Bigl(\lambda^2
-\sum_{j\in\set{n}}\overline{p}_{j,\bfcdot}^2
-\sum_{r\in\set{n}}\overline{p}_{\bfcdot,r}^2
+\frac{1}{n^2}\sum_{(j,r)\in\set{n}^2}p_{j,r}^2\Bigr)\\
&=\frac{2}{n}(\Var S_n -\lambda+\lambda^2). 
\end{align*}
On the other hand, using \eqref{e8164}, we get
\begin{align*}
\gamma'=\gamma''-\gamma
&\leq \frac{1}{n}\sum_{(j,r)\in\set{n}^2}p_{j,r}^2
-\sum_{j\in\set{n}}\overline{p}_{j,\bfcdot}^2-\gamma
=\frac{1}{n}\sum_{(j,r)\in\set{n}^2}p_{j,r}^2
+\Var S_n-\lambda\\
&=\Var S_n -\frac{1}{n}\sum_{(j,r)\in\set{n}^2}p_{j,r}(1-p_{j,r}).
\end{align*}

\item For $(p_{j,r})\in\{0,1\}^{\set{n}\times \set{n}}$, we have
\begin{align*}
\gamma'
&=\frac{2}{n^2(n-1)}\sum_{(j,k)\in\set{n}_{\neq}^2}
\sum_{(r,s)\in\set{n}_{\neq}^2}p_{j,r}(1-p_{j,s})p_{k,s}(1-p_{k,r})\\
&=\frac{2}{n^2(n-1)}\sum_{(j,k)\in\set{n}^2}
\sum_{(r,s)\in\set{n}_{\neq}^2}p_{j,r}(1-p_{j,s})p_{k,s}(1-p_{k,r})\\
&\quad{}
-\frac{2}{n^2(n-1)}\sum_{j\in\set{n}}
\sum_{(r,s)\in\set{n}_{\neq}^2}p_{j,r}(1-p_{j,s})p_{j,s}(1-p_{j,r})\\
&=\frac{2}{n^2(n-1)}\sum_{(r,s)\in\set{n}_{\neq}^2}
\Bigl(\sum_{j\in\set{n}}p_{j,r}(1-p_{j,s})\Bigr)
\Bigl(\sum_{k\in\set{n}}p_{k,s}(1-p_{k,r})\Bigr)\\
&=\frac{2}{n-1}\sum_{(r,s)\in\set{n}_{\neq}^2}
\Bigl(\overline{p}_{\bfcdot,r}
-\frac{1}{n}\sum_{j\in\set{n}}p_{j,r}p_{j,s}\Bigr)
\Bigl(\overline{p}_{\bfcdot,s}
-\frac{1}{n}\sum_{j\in\set{n}}p_{j,r}p_{j,s}\Bigr).\qedhere
\end{align*}
\end{enumerate}
\end{proof}

\begin{proof}[Proof of Lemma \ref{l32864}]
From Lemma \ref{l387256}(\ref{l387256.b}), it follows that 
$\gamma'\leq \frac{2}{n}(\Var S_n-\lambda+\lambda^2)$. Therefore
\begin{align*}
A
\leq \lambda-\Var S_n+\frac{2}{n}(\Var S_n-\lambda+\lambda^2)
=B.
\end{align*}
From \eqref{e8164}, we get that $\gamma+\gamma'=\gamma''\geq 0$. 
Further, the Cauchy-Schwarz inequality implies that 
$\lambda^2=(\sum_{j=1}^n\overline{p}_{j,\bfcdot})^2
\leq n\sum_{j=1}^n\overline{p}_{j,\bfcdot}^2$. Therefore, 
using \eqref{e82165}, we obtain
\begin{align*}
B
&
=\frac{n-2}{n}\Bigl(\sum_{j=1}^n\overline{p}_{j,\bfcdot}^2+\gamma
\Bigr)+\frac{2\lambda^2}{n}
\leq \frac{n-2}{n}\Bigl(\sum_{j=1}^n\overline{p}_{j,\bfcdot}^2+\gamma
\Bigr)+2\sum_{j=1}^n\overline{p}_{j,\bfcdot}^2 \\
&\leq \Bigl(3-\frac{2}{n}\Bigr)
(\lambda-\Var S_n)
+2\gamma'
\leq 
\Bigl(3-\frac{2}{n}\Bigr)A.\qedhere
\end{align*}
\end{proof}


{\small 
\let\oldbibliography\thebibliography
\renewcommand{\thebibliography}[1]{\oldbibliography{#1}
\setlength{\itemsep}{0.8ex plus0.8ex minus0.8ex}} 
\linespread{1}
\selectfont
\bibliography{pardis_108.bib}
}
\end{document}